\newcommand{\norm}[1]{\mbox{$\|#1\|$}}
\newcommand{\x}{\times}
\newcommand{\cs}{\mbox{$C^{*}$-algebra}}
\newcommand{\css}{\mbox{$C^{*}$-algebras}}
\newcommand{\C}{\mathbb{C}}
\newcommand{\R}{\mathbb{R}}
\newcommand{\ov}[1]{\mbox{$\overline{#1}$}}
\newcommand{\al}{\mbox{$\alpha$}}
\newcommand{\eps}{\mbox{$\epsilon$}}
\newcommand{\bt}{\mbox{$\beta$}}
\newcommand{\ga}{\mbox{$\gamma$}}
\newcommand{\Ga}{\mbox{$\Gamma$}}
\newcommand{\de}{\mbox{$\delta$}}
\newcommand{\De}{\mbox{$\Delta$}}
\newcommand{\la}{\mbox{$\lambda$}}
\newcommand{\si}{\mbox{$\sigma$}}
\newcommand{\om}{\mbox{$\omega$}}
\newcommand{\mfm}{\mathfrak{m}}
\newcommand{\mfn}{\mathfrak{n}}
\newcommand{\mfH}{\mathfrak{H}}
\newcommand{\bc}{\begin{center}}
\newcommand{\ec}{\end{center}}
\newcommand{\be}{\begin{enumerate}}
\newcommand{\ee}{\end{enumerate}}
\newcommand{\beqn}{\begin{eqnarray}}
\newcommand{\eeqn}{\end{eqnarray}}
\newcommand{\beqns}{\begin{eqnarray*}}
\newcommand{\eeqns}{\end{eqnarray*}}
\newcommand{\bq}{\begin{quote}}
\newcommand{\eq}{\end{quote}}
\newcommand{\bi}{\begin{itemize}}
\newcommand{\ei}{\end{itemize}}
\newcommand{\bd}{\begin{description}}
\newcommand{\ed}{\end{description}}
\theoremstyle{plain}
\newtheorem{theorem}{Theorem}
\newtheorem{proposition}[theorem]{Proposition}
\newtheorem{corollary}[theorem]{Corollary}
\theoremstyle{definition}
\newtheorem{definition}[theorem]{Definition}
\theoremstyle{remark}
\begin{document}
\title{Group amenability properties for von Neumann algebras}

\author{Anthony T. Lau}
\address{Department of Mathematical and Statistical Sciences, 
University of Alberta, Edmonton, AB T6G 2G1}
\email{tlau@math.ualberta.ca}
\thanks{The first author is supported by the NSERC grant A-7679.}
\author{Alan L. T. Paterson}
\address{Department of Mathematics, University of Mississippi, University,
Mississippi 38677}
\email{mmap@olemiss.edu}

\subjclass{Primary: 22D10; 22D25; 43A07}
\date{January, 2005}

\begin{abstract}
In his study of amenable unitary representations, M. E. B. Bekka asked if there is an analogue for such representations of the remarkable fixed-point property for amenable groups.  In this paper, we prove such a fixed-point theorem in the more general context of a $G$-amenable von Neumann algebra $M$, where $G$ is a locally compact group acting on $M$.   
The F\o lner conditions of Connes and Bekka are extended to the case where $M$ is semifinite and admits a faithful, semifinite, normal trace which is invariant under the action of $G$.
\end{abstract}

\maketitle

\section{Introduction}

In \cite{Bekka}, Bekka introduced the concept of an {\em amenable} representation of a locally compact group $G$ on a Hilbert space.  A continuous unitary representation $\pi$ of $G$ on a Hilbert space $\mfH_{\pi}$ is defined to be amenable if there exists a state $p$ on $B(\mfH_{\pi})$ that is invariant in the sense that 
\begin{equation}  \label{eq:gamen}
    \pi(x)p\pi(x)^{-1}=p
\end{equation}         
for all $x\in G$.  Bekka developed a remarkable theory of amenable representations, paralleling the classical theory of amenable groups, and raised the question concerning what should be the fixed point property for such representations.  In this paper we obtain a solution to this question, and develop, more generally, a theory of $G$-amenable von Neumann algebras $M$ for group actions on $M$.  We also obtain a F\o lner condition in the case where $M$ is semifinite, generalizing the case $M=B(\mfH)$ considered by Connes (\cite{Connes}) and Bekka.  This generalization substantially widens the applicability of this condition.   We now discuss in a little more detail the contents of the paper.

The second section describes some basic facts about Banach $G-A$-modules where $A$ is a Banach algebra. These will be applied in the case where $A$ is the predual of a von Neumann algebra $M$.  The third section discusses the notion of $G$-amenability for a von Neumann algebra $M$: $M$ is called {\em $G$-amenable} for an automorphic action $x\to \al_{x}$ of $G$ on $M$ if there exists a state $p$ on $M$ such that $(\al_{x^{-1}})^{*}(p)=p$.  (This notion of $G$-amenability has also been considered by R. Stokke (\cite{Stokke}).)  In the situation of (\ref{eq:gamen}), $\pi$ being an amenable representation is the same as $B(\mfH_{\pi})$ being $G$-amenable under the action: $\al_{x}(T)=\pi(x)T\pi(x)^{-1}$.
We also discuss some fairly familiar amenability properties of $G$-actions on $M$ paralleling those of classical amenability theory for $L^{\infty}(G)$, and introduce a notion of inner actions for
$A=M_{*}$.  The section concludes with a discussion of {\em hyperstates}: these were introduced for von Neumann algebras by Connes.  

In the fourth section, we discuss our fixed-point property characterizing $G$-amenability.  Examples show that there is no hope of there being a fixed-point theorem involving a compact convex set as in normal amenability theory.   We are motivated by \cite{LauP}, where a fixed-point property characterizing inner amenability of a locally compact group $G$ is proved.  This has the following form: {\em $G$ is inner amenable (i.e. $G$ admits a mean on $L^{\infty}(G)$ invariant under conjugation) if and only if, whenever $X$ is a Banach $G$-module, there exists a net of multiplication operators on $X^{**}$ coming from $P(G)$ that converges weak$^{*}$ to an operator $T\in B(X^{**})$ which is invariant under conjugation by elements of $G$.}  Here, the weak$^{*}$-topology on $B(X^{**})$ is that obtained by identifying $B(X^{**})$ with the Banach dual space $(X^{**}\hat{\otimes} X^{*})^{*}$ in the canonical way.  We show in the present paper that a similar fixed-point property characterizes $G$-amenability for a wide class of von Neumann algebras $M$.  The predual of $M$ has to be a Banach algebra $A$ for which the action of $G$ on $M$ induces a Banach algebra action.  Examples of such $M$'s are $B(\mfH)$ (where the action is determined by an amenable representation $\pi$ of $G$ and $A$ is the algebra of trace class operators) and $L^{\infty}(G)$ (under the action induced by conjugation and where $A=L^{1}(G)$).  In our fixed-point property, the group algebra $L^{1}(G)$ of \cite{LauP} is replaced by the covariant algebra $L^{1}(G,A^{1})$, where $A^{1}$ is the Banach algebra $A$ with identity adjoined.  In general, $A$ does not have a bounded approximate identity.  We show that when $A$ {\em does} have a bounded approximate identity and the $G$-action is inner for $A$, then the fixed-point property is required only for essential left Banach $A$-modules.  Further, in this case, we can replace 
$L^{1}(G,A^{1})$ by $L^{1}(G,A)$.

In the fifth section, we describe a number of examples illustrating the theory.  In one of these examples, $M$ is a Hopf-von Neumann algebra: in that situation there is a natural Banach algebra structure on $A$ to which the theory applies.  We discuss briefly in the sixth section the fixed-point property for Hopf-von Neumann algebras in general.  The last section proves a F\o lner condition that characterizes $G$-amenability for semifinite $M$.  The proof follows that of Bekka's, the 
F\o lner sets of classical amenability being replaced by (finite) projections $P$ in $M$ for which $0<\tau(P)<\infty$, where $\tau$ is an invariant, faithful, semifinite, normal trace on $M$.   When $M=B(\mfH)$ and the action of $G$ on $M$ is given by a representation of $G$ in $M$, then our F\o lner condition reduces to that of \cite{Bekka}.  The classical 
F\o lner condition is also a special case of our theorem.  In addition, Connes (\cite{Connes}) proved a result in which, under certain conditions, the tracial state on a $II_{1}$-factor $N$ is approximated by states determined by the finite dimensional projections arising in the F\o lner condition.  A version of this result, with $N$ replaced by a $\cs$ associated with a group representation in $M$, is given to conclude the paper.

The authors are grateful to Paul Jolissaint bringing to their attention (after the present paper appeared) his earlier paper \cite{Jol} which gives a different fixed point property that also answers (among other things) Bekka's question.   More precisely, and in our notation, suppose that $X$ is an isometric Banach $G$-module with action $\bt$ and $K$ is a 
$G$-invariant weak$^{*}$-compact, convex subset of the unit ball of $X^{*}$.  Let $C(M,K)$ be the set of bounded linear maps $T$ from $A=M_{*}$ to $X^{*}$ such that $T(S_{*}(M))\subset K$.   There is a natural $G$-action on $C(M,K)$ given by:
$g.T=(\bt_{g})^{*}\circ T\circ ((\al_{g})_{*})^{-1}$.   Then the triple $(M,G,\al)$ is said to have the 
{\em conditional fixed point property} if for all such $X, K$, the existence of a $G$-invariant element of $C(M,K)$
implies the existence of a $G$-fixed point in $K$.  It is shown in \cite{Jol} that $(M,G,\al)$ has the conditional fixed point property if and only if there exists an $\al$-invariant state on $M$.  This fixed point property does not require 
$A$ to be a Banach algebra as does our Theorem~\ref{th:fpt}.  On the other hand, while the latter theorem requires 
a $G-A$-fixed point for the {\em fixed} algebra $L^{1}(G,A^{1})$, the $C(M,K)$'s of the conditional fixed point property vary from $K$ to $K$.   Jolissaint also characterizes his fixed point property in terms of a conditional expectation onto $W^{*}(G)$.  The two fixed point theorems seem very different in character, and it would be interesting to explore the relation between them.


\section{Banach $G-A$ modules}

We first recall some standard terminology.  Let $G$ be a locally compact group with left Haar measure $\la$.  The family of compact subsets of $G$ is denoted by $\mathcal{C}(G)$, and $C_{c}(G)$ is the space of continuous, complex-valued functions on $G$ with compact support.  The set of probability measures in $L^{1}(G)$ is denoted by $P(G)$. 
The universal $C^{*}$-algebra of $G$ is denoted by $C^{*}(G)$ and the reduced $C^{*}$-algebra of $G$ by $C^{*}_{\rho}(G)$.

The Banach algebra of bounded linear operators on a Banach space $X$ is denoted by $B(X)$.  The Banach space $X$ is called a {\em left Banach $G$-module} if for each $x\in G$, we are given a map
$\xi\to x\xi$ in $B(X)$, with $\norm{x\xi}\leq \norm{\xi}$ for all $x\in G, \xi\in X$, and such that $x(y\xi)=(xy)\xi$ for all 
$x,y\in G$, and for fixed $\xi\in X$, the map $x\to x\xi$ is norm continuous.  The action of $G$ on such an $X$ integrates up to give a representation of $L^{1}(G)$ on $X$:
\[                       f\xi=\int f(x)x\xi\,d\la(x).                      \]
Now let $A$ be a Banach algebra.  A Banach space $X$ is called a {\em left Banach $A$-module} if it is a left $A$-module such that $\norm{a\xi}\leq \norm{a}\norm{\xi}$ for all $a\in A, \xi\in X$.  The left Banach $A$-module $X$ is called {\em essential} if for all $\xi\in X$, there exist $a\in A$ and $\eta\in X$ such that $a\eta=\xi$.  Right and two-sided Banach $G$-modules (resp. Banach $A$-modules) are defined in the obvious way.

We next recall some ``Arens product'' terminology.  Let $A$ be a Banach algebra and 
$X$ be a left Banach $A$-module.  Then for $a\in A$, $f\in X^{*}$, the functional
$fa\in X^{*}$ is defined by: $fa(\xi)=f(a\xi)$.  Now for $m\in X^{**}$, we define $mf\in A^{*}$ by: $mf(a)=m(fa)$.  Lastly for $n\in A^{**}$, we define $nm\in X^{**}$ by: $nm(f)=n(mf)$.  Let $T_{n}\in B(X^{**})$ be given by: $T_{n}m=nm$.
The above applies to $X=A$ with the $A$ action that of left multiplication; in that case $(n,m)\to nm$ is the first (left) Arens product on $A^{**}$.  With this product, $A^{**}$ is a Banach algebra.   The next proposition is well-known (e.g. \cite[p.527]{Palmer}).

\begin{proposition}           \label{prop:bna}
Let $B$ be a Banach algebra with a bounded approximate identity $\{e_{\de}\}$ and let $B^{**}$ be the second dual Banach algebra under the Arens multiplication. Then there exists $e\in B^{**}$ such that $me=m$ for all $m\in B^{**}$.  Further, $e$ can be taken to be any weak$^{*}$-cluster point of $\{\widehat{e_{\de}}\}$.
\end{proposition}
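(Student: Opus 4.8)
The plan is to exploit the universal property of the second dual together with weak$^{*}$-density of $B$ in $B^{**}$. First I would recall that, because $\{e_\delta\}$ is bounded, the net $\{\widehat{e_\delta}\}$ lies in a norm-bounded subset of $B^{**}$, hence by Banach--Alaoglu has a weak$^{*}$-cluster point; fix such a point and call it $e$. The goal is to show $me=m$ for every $m\in B^{**}$, where the product is the first Arens product.

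The key computation is to first establish the identity at the level of $B$ itself, i.e. that $\widehat{b}\,e=\widehat{b}$ for all $b\in B$, and then bootstrap to all of $B^{**}$ by separate weak$^{*}$-continuity of the relevant maps. For the first part: unravelling the definitions, for $f\in B^{*}$ one has $(\widehat{b}\,e)(f)=e(f\widehat{b})$, and $f\widehat{b}\in B^{*}$ is the functional $c\mapsto f(bc)$ (here I am using that on $X=B$ the module action is left multiplication, so $fb(c)=f(bc)$ in the notation of the paragraph preceding the proposition). Since $e$ is a weak$^{*}$-cluster point of $\{\widehat{e_\delta}\}$, passing to a subnet gives $e(f\widehat{b})=\lim_\delta \widehat{e_\delta}(f\widehat{b})=\lim_\delta f(be_\delta)=f(b)$, the last equality because $be_\delta\to b$ in norm (right approximate identity property — and a bounded two-sided approximate identity is in particular a right one). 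Hence $(\widehat{b}\,e)(f)=f(b)=\widehat{b}(f)$ for all $f$, so $\widehat{b}\,e=\widehat{b}$.

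Now I would extend this from $\widehat{B}$ to all of $B^{**}$. The map $m\mapsto me$ is right multiplication by $e$ in $(B^{**},\square)$; a standard property of the first Arens product is that, for fixed $e\in B^{**}$, the map $m\mapsto me$ is weak$^{*}$--weak$^{*}$ continuous on $B^{**}$ (whereas left multiplication by a fixed element need not be). Since $\widehat{B}$ is weak$^{*}$-dense in $B^{**}$ by Goldstine's theorem, and since the identity map $m\mapsto m$ is trivially weak$^{*}$-continuous, the two weak$^{*}$-continuous maps $m\mapsto me$ and $m\mapsto m$ agree on the weak$^{*}$-dense subset $\widehat{B}$, hence agree everywhere. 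This gives $me=m$ for all $m\in B^{**}$, and the construction shows $e$ may be taken to be any weak$^{*}$-cluster point of $\{\widehat{e_\delta}\}$.

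The only genuinely delicate point is the weak$^{*}$-continuity of right multiplication by $e$ in the first Arens product; this is where the asymmetry of the Arens construction matters, and it is worth verifying directly from the definitions $nm(f)=n(mf)$, $mf(a)=m(fa)$, $fa(\xi)=f(a\xi)$ rather than merely asserting it. Concretely, one checks that if $m_\lambda\to m$ weak$^{*}$ in $B^{**}$ then for each fixed $f\in B^{*}$ the functionals $m_\lambda f\in B^{*}$ need not converge, but the scalars $(m_\lambda e)(f)=e(m_\lambda f)=(m_\lambda f)(\ldots)$ — more carefully, one uses that $(m e)(f) = m(ef)$ where $ef\in B^{*}$ is a fixed functional depending only on $e$ and $f$, so $m\mapsto (me)(f)=m(ef)$ is weak$^{*}$-continuous as claimed. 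Everything else is a routine unwinding of the Arens definitions, and the boundedness of $\{e_\delta\}$ is used only to guarantee a cluster point exists.
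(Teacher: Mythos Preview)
Your proof is correct and is exactly the standard argument; the paper does not actually prove this proposition but simply cites it as well-known (Palmer, p.~527). Your verification that $(me)(f)=m(ef)$ with $ef\in B^{*}$ fixed is precisely the point that makes right multiplication by $e$ weak$^{*}$-continuous in the paper's convention for the first Arens product, and together with Goldstine this gives the result.
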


Now assume that there is an action of $G$ on $A$.  So we are given a bounded homomorphism $x\to \bt_{x}$ where each $\bt_{x}$ is an isomorphism of $A$ with $\norm{\bt_{x}}\leq 1$ and the map $x\to \bt_{x}(a)$ is norm continuous from $G$ into $A$ for each $a\in A$.  The map $x\to \bt_{x}$ dualizes to $A^{*}$ and $A^{**}$ in the natural way.  So for $x\in G, f\in A^{*}, a\in A$,
$\bt_{x}^{*}(f)\in A^{*}$ is given by: $\bt_{x}^{*}f(a)=f(\bt_{x}a)$, and for $m\in A^{**}$, $\bt_{x}^{**}m(f)=m(\bt_{x}^{*}f)$.  One readily shows that 
$(\bt_{x}^{*}f)a=\bt_{x}^{*}(f\bt_{x}a)$ and
$m\bt_{x}^{*}f=\bt_{x}^{*}[(\bt_{x}^{**}m)f]$ and hence that 
$\bt_{x}^{**}(nm)=\bt_{x}^{**}(n)\bt_{x}^{**}(m)$ for all $n,m\in A^{**}$.  

Next suppose that $X$ is a left Banach $G$-module that is also a left Banach $A$-module.
Assume further that $\bt_{x}(a)\xi=xax^{-1}\xi$ for all $x\in G, a\in A$ and $\xi\in X$.
Then we say that $X$ is a {\em left Banach $G-A$-module}.  (Such a module is the Banach space analogue of a covariant representation of a $\cs$ on a Hilbert space.)
Note that in this definition, $xax^{-1}$ usually has no meaning as an element of $A$ since $A$ need not be a $G$-module.  However, $xax^{-1}\xi=x(a(x^{-1}\xi))$ does make sense, and is required to be $\bt_{x}(a)\xi$.  

Of special importance is the case where there is given a strictly continuous homomorphism $x\to \ga(x)$ from $G$ into the
invertible group of the multiplier algebra $M(A)$ of $A$ such that $\norm{\ga(x)}\leq 1$ for all $x\in G$.  Write $xa, ax$ in place of $\ga(x)a, a\ga(x)$ ($a\in A$).  Then there is a natural action $x\to \bt_{x}$ of $G$ on $A$ given by: $\bt_{x}(a)=xax^{-1}$.  In this case, we call the action $\bt$ of $G$ on $A$ {\em inner}.  Suppose now that $A$ has a bounded approximate identity $\{e_{\de}\}$, that the action of $G$ on $A$ is inner and that $X$ is an essential left Banach $A$-module.  Then $X$ is automatically a left Banach $G-A$-module, the left $G$-module structure on $X$ being given by: $x\xi=\lim (xe_{\de})\xi$. 

Here are two examples of inner actions of $G$ on some $A$ relevant to this paper.  First, let $A=L^{1}(G)$ under convolution and $(\ga(x)f)(t)=f(x^{-1}t), (f\ga(x))(t)=f(tx^{-1})\De(x^{-1})$ where $\De$ is the modular function of $G$.  In the notation of \cite[(0.7)]{Paterson}, 
$\ga(x)f=x*f=\de_{x}*f$, and  $f\ga(x)=f*x=f*\de_{x}$.  Then $\bt_{x}(f)(t)=x*f*x^{-1}(t)=f(x^{-1}tx)\De(x)$, and $\bt$ is inner on $L^{1}(G)$. For the second example, we take $\pi$ to be a unitary representation of $G$ on a Hilbert space $\mfH_{\pi}$, $A=TC(\mfH_{\pi})$, the Banach algebra of trace class operators on $\mfH_{\pi}$, and $\ga(x)=\pi(x)$, so that $\bt_{x}(a)=\pi(x)a\pi(x^{-1})$. In the first case, $A$ has a bounded approximate identity, but not (as we will see) in the second case.   

Now let $L^{1}(G,A)$ be the covariant algebra associated with an action $\bt$ of $G$ on a Banach algebra $A$.  The theory of $C^{*}$-dynamical systems, developed in \cite[7.6]{Pedersen}, applies in this Banach algebra context.  Below, except where special comments are made, the $C^{*}$-dynamical proofs apply equally to the Banach algebra case.  Let $C_{c}(G,A)$ be the normed algebra of continuous functions $f:G\to A$ with compact support, with norm and product defined by:
\[\int\norm{f(x)}\,d\la(x),\hspace{.2in} f*g(x)=\int f(y)\bt_{y}(g(y^{-1}x))\,d\la(y).   \]
The Banach algebra $L^{1}(G,A)$ is the completion of $C_{c}(G,A)$. 

Assume now that $A$ has a bounded approximate identity.  Then $L^{1}(G,A)$ has a bounded approximate identity in $C_{c}(G,A)$, an example of which can be taken to be $\{f_{\de}\otimes e_{\de}\}$ where $\{f_{\de}\}$ is a bounded approximate identity for $L^{1}(G)$ of the standard kind and $\{e_{\de}\}$ is a bounded approximate identity  in $A$.  For each $a\in A, \mu\in M(G)$, where $\mu$ has compact support, there are respectively left and right multipliers $L(a,\mu), R(a,\mu)$ of $L^{1}(G,A)$, where for $f\in L^{1}(G,A)$:
\begin{gather}
 L(a,\mu)f(t)  = a\int \bt_{y}(f(y^{-1}t))\,d\mu(y),  \label{eq:lamu} \\
R(a,\mu)f(t)  =  \int f(ty^{-1})\bt_{ty^{-1}}(a)\De(y)^{-1}\,d\mu(y). \label{eq:ramu}
\end{gather}
(These equalities are first defined for $f\in C_{c}(G,A)$ and then extended by continuity to $L^{1}(G,A)$.)
Further, the pair 
$(L(a,\mu), R(a,\mu))$ is a (two-sided) multiplier for $L^{1}(G,A)$.  Next, the elements of both
$G$ and $A$ act as multipliers on $L^{1}(G,A)$.  Indeed, for $a\in A, x\in G$, we take $af=L(a,\de_{e})f$,
$fa=R(a,\de_{e})f$, $xf=\lim_{\de}L(e_{\de},\de_{x})f$, and 
$fx= \lim_{\de}R(e_{\de},\de_{x})f$, these limits being taken in the norm topology.
For example, $(af)(t)=a(f(t))$ and the multiplier condition $(af)*g=a(f*g)$ holds since
\[ a(f*g)(t)=a\int f(y)\al_{t}(g(y^{-1}t))\,d\la(y)=((af)*g)(t).          \]
It is readily checked that $fa(t)=f(t)\bt_{t}(a)$ and 
\begin{equation}        \label{eq:yffy}
xf(t)=\bt_{x}(f(x^{-1}t))  \hspace{.2in} fx(t)=f(tx^{-1})\De(x^{-1}).
\end{equation}
Further, it is straightforward to check that 
\begin{equation}            \label{eq:btxax}
\bt_{x}(a)f=xax^{-1}f    
\end{equation}
for all $x\in G, a\in A, f\in L^{1}(G,A)$.  So $L^{1}(G,A)$ is a left Banach $G-A$-module.
More generally, if $X$ is an essential left Banach $L^{1}(G,A)$-module, then (e.g. \cite[pp. 15-16]{Lance}, \cite[p.256]{Pedersen}) $X$ is a left Banach $G-A$-module and the $L^{1}(G,A)$ action on $X$ is given by integration.  (See (\ref{eq:integ}) below.)

We note that $L^{1}(G,A)$ is an {\em essential} left Banach $A$-module.  Indeed, since $A$ has a bounded approximate identity, we have $A^{2}=A$, and so the set $W=\{af: a\in A, f\in L^{1}(G,A)\}$ contains $L^{1}(G)\otimes A$.  Since $W$ is a closed linear subspace of $L^{1}(G,A)$ (\cite[Theorem 32.22]{HR2}), it follows that $W=L^{1}(G,A)$, and so $L^{1}(G,A)$ is an essential left Banach $A$-module.

Now remove the assumption that $A$ has a bounded approximate identity and let $X$ be a left Banach $G-A$-module.   The actions of $G, A$ on such a module $X$ integrate up to give that $X$ is a left Banach $L^{1}(G,A)$-module: for $f\in L^{1}(G,A)$, $f\xi$ is given by:
\begin{equation}
 f\xi=\int f(x)x\xi\,d\la(x).     \label{eq:integ}
\end{equation}
In general, we cannot reverse this implication.        


\section{$G$-amenability}

In this section we discuss the notion of $G$-amenability for a von Neumann algebra $M$ and some of its equivalent formulations.  Most of this is effectively in the papers \cite{Bekka,Stokke} but for the convenience of the reader, we will sketch proofs in some cases.  

So let $M$ be a von Neumann algebra $M$ with predual $M_{*}$.  Let $S(M)$ be the set of states on $M$ and $S_{*}(M)$ the set of normal states in $M_{*}$, i.e. $S_{*}(M)=S(M)\cap M_{*}$.  We note that $S_{*}(M)$ is weak$^{*}$-dense in $S(M)$.  Let $G$ be a locally compact group with an (automorphic) action on $M$ (\cite[p.238]{Takv2}).  This means that we are given a homomorphism $x\to \al_{x}$ from $G$ into $Aut(M)$ such that for each $m\in M$, the map $x\to \al_{x}(m)$ is weak$^{*}$-continuous.  Then $M$ is a right $G$-module under the map: $(x,m)\to m.x=\al_{x^{-1}}(m)$.  Dualizing gives that $M^{*}$ 
(resp. $M_{*}$) are left $G$-modules under the map: $(x,c)\to x.c=(\al_{x^{-1}})^{*}(c)$ 
(resp. $(\al_{x^{-1}})_{*}(c)$).  Further (\cite[p.239]{Takv2}) $M_{*}$ is  actually a left Banach $G$-module: the map $x\to x.a$ is norm continuous for each $a\in M_{*}$.   Note that this gives a positive action of $G$ on $M_{*}$ (in the sense of Stokke (\cite[p.3]{Stokke}), i.e. for all $x\in G$, $x.S_{*}(M)=S_{*}(M)$.  Also, $x.S(M)=S(M)$.
Of course, we obtain $M_{*}$ as a Banach $L^{1}(G)$-module by integrating: so, for example, for $a\in M_{*}, f\in L^{1}(G)$, $f.a=\int f(x)x.a\,d\la(x)$ (Bochner integral).   This dualizes to give that $M$ is a right Banach $L^{1}(G)$-module and $M^{*}$ is a left Banach $L^{1}(G)$-module.  $M$ is a right Banach $L^{1}(G)$-module by integrating weakly: so for $f\in L^{1}(G), m\in M, a\in M_{*}$, we have 
\[     \left<m.f,a\right>=\int_{G}f(x)\left<m.x,a\right>\,d\la(x).                             \]

We will sometimes require the action $\al$ to be {\em inner on $M$} in the following sense.    For this, $M$ is a $G$-bimodule with the property that for each $m\in M$, the maps $x\to xm, x\to mx$ are bounded and weak$^{*}$-continuous, and $\al_{x}(m)=xmx^{-1}$.   An example of this is given by (a) $M=L^{\infty}(G)$ with $\al_{x}\phi(t)=\phi(x^{-1}tx)$ ($t\in G$).    In this case, the $G$-module structure on $M$ is given by: $x\phi(t)=\phi(tx), \phi x(t)=\phi(xt)$.  We can also take the $G$-module structure to be given by (b) $x\to x\phi, x\to \phi$, in which case $\al_{x}(\phi)=x\phi$,  and (c) 
$x\to \phi, x\to \phi x$ in which case $\al_{x}(\phi)=\phi x^{-1}$.   An example of an inner action on $M$ of particular importance is that determined by a weak$^{*}$-continuous homomorphism $x\to \pi(x)$ into the unitary group of $M$.  We will call $\pi$ a {\em representation of $G$ in $M$}.   The $G$-module structure on $M$ is given by: $xm=\pi(x)m, mx=m\pi(x)$ and then $\al_{x}(m)=\pi(x)m\pi(x^{-1})$.   This is the context for Bekka's theory of amenable representations.  In the cases $M=L^{\infty}(G)$ above, the actions $\al$ are not in general given by representations of $G$ in $M$ but - as is to be expected at least for standard semifinite $M$ (\cite[Part 1, Chapter 6, Theorem 4]{D1}) - the $\al_{x}$'s are spatial.  In fact in (a), (b) and (c) above, $\al$ is determined by a representation of $G$ on $L^{2}(G)$.  Indeed, regard 
$L^{\infty}(G)\subset B(L^{2}(G))$ in the usual way; then in case (a), the action $\al$ is determined by the conjugation representation $\pi'$ of $G$ on $L^{2}(G)$.  So  $\al_{x}(\phi)=\pi'(x)\phi\pi'(x^{-1})$ where 
$\pi'(x)f(t)=f(x^{-1}tx)\De(x)^{1/2}$.  Of course, $\pi'(G)$ is not usually contained in $M$.  In case (c), $\al_{x}(\phi)=\pi_{2}(x)\phi\pi_{2}(x^{-1})$ where $\pi$ is the left regular representation of $G$, and (b) is similar.

If the action is inner on $M$, then $M^{*}, M_{*}$ are $G$-modules; for example, for $a\in M_{*}$, $\left<xm,a\right> = \left<m,ax\right>$ and similarly for $xa$.  Note that the maps $x\to xa, x\to ax$ are norm continuous so that $M_{*}$ is a Banach $G$-module, and hence a Banach $L^{1}(G)$-module.   Dualizing gives that $M, M^{*}$ are Banach $L^{1}(G)$-modules.  In the inner case, we have to distinguish carefully between $f.a$ and $fa$, and $m.f$ and $mf$ ($f\in L^{1}(G)$).   

Now suppose that $G$ has an action $\al$ (not necessarily inner) on $M$.  
\begin{definition}  \label{def:Gamenable}
The von Neumann algebra $M$ is called {\em $G$-amenable} (for $\al$) if there exists $p\in S(M)$ such that $(\al_{x^{-1}})^{*}(p)=x.p=p$ for all $x\in G$. Such a state will be called a  {\em $G$-invariant state}.  
\end{definition}

{\em $G$-amenability} in our terminology corresponds to {\em $G$ acting amenably on $M_{*}$} in Stokke's terminology (\cite{Stokke}).  
For the examples (a), (b) and (c) above where $M=L^{\infty}(G)$, in (a), a $G$-invariant state is respectively an inner invariant mean, a right invariant mean and a left invariant mean.  So $G$-amenability is the same as {\em inner amenability} (\cite{LauP}) for (a), and to amenability for (b) and (c).  In the case where the action of $G$ on $M=B(\mfH)$ is determined by a representation $\pi$ of $G$, $G$-amenability is the same as the amenability of the representation (\cite{Bekka}).  (These examples are also noted by Stokke within his framework.) 

$G$-amenability can be characterized in terms of a Dixmier type property.  (For the classical Dixmier property, see \cite[Proposition 2.1]{Paterson}.)  The corresponding property for $G$-amenability is easy to prove.  This says that {\em there exists a $G$-invariant state on $M$ if and only if the linear span $Y$ of operators of the form $(\al_{x}(m)-m)$ ($x\in G, m\in M$) is not norm dense in $M$.}  The proof is effectively the same as that in the classical case.  One implication is trivial.  For the converse, suppose that $Y$ is not norm dense in $M$ and let $p\in M^{*}$ be non-zero and such that $p(Y)=0$.  We can suppose that $p$ is self-adjoint and then, using the uniqueness of the Jordan decomposition (\cite[p.45]{Pedersen}, \cite[pp.120,140]{Takv1}), that $p$ is positive.  The result now follows.  

The following proposition follows from \cite[Corollary 1.12]{Stokke} - for the sake of completeness, we sketch the proof.

\begin{proposition}      \label{prop:amen} 
The following are equivalent:
\bi
\item[(i)] $M$ is $G$-amenable.
\item[(ii)] There exists a state $q$ on $M$ such that $q(m.\mu)=q(m)$ for all $m\in M, \mu\in P(G)$.  (So $q$ is {\em topological invariant}.)
\item[(iii)] Given $C\in \mathcal{C}(G)$ and $\eps>0$, there exists $a\in S_{*}(M)$ such that for all $x\in C$, we have 
\begin{equation}    \label{eq:reiter1}
\norm{x.a-a}<\eps.
\end{equation}
\ei
\end{proposition}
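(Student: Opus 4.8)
The plan is to prove the equivalence by the cycle $(i)\Rightarrow(ii)\Rightarrow(iii)\Rightarrow(i)$, following the classical pattern for topological invariance, Reiter's condition, and the existence of an invariant mean, but carried out in the present noncommutative setting.

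For $(i)\Rightarrow(ii)$: given a $G$-invariant state $p\in S(M)$, I want to upgrade pointwise invariance to topological invariance. For $\mu\in P(G)$ and $m\in M$ one has $p(m.\mu)=\int_G f(x)\,p(m.x)\,d\la(x)$ when $\mu=f\la$ with $f\in P(G)\cap L^1(G)$, and since $p(m.x)=\langle m.x,p\rangle=\langle m,x.p\rangle=\langle m,p\rangle$ for all $x$, we get $p(m.\mu)=p(m)$ directly for such $\mu$; a weak$^*$-density/approximation argument in $P(G)$ extends this to all $\mu\in P(G)$. So here one may even take $q=p$, and the implication is essentially immediate.

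For $(iii)\Rightarrow(i)$: this is the standard ``Day trick.'' Fix a decreasing cofinal net of pairs $(C,\eps)$ with $C\in\mathcal{C}(G)$ and $\eps>0$, and for each such pair pick $a_{C,\eps}\in S_*(M)$ satisfying $\norm{x.a_{C,\eps}-a_{C,\eps}}<\eps$ for all $x\in C$. View each $a_{C,\eps}$ as an element of $S(M)\subset M^*$; by weak$^*$-compactness of $S(M)$ (Banach--Alaoglu, since $S(M)$ is weak$^*$-closed in the unit ball of $M^*$), pass to a weak$^*$-cluster point $p\in S(M)$. For fixed $x\in G$, eventually $x\in C$, so $\norm{x.a_{C,\eps}-a_{C,\eps}}\to 0$ along the net; since $c\mapsto x.c$ is weak$^*$-continuous on $M^*$ (being the adjoint of the weak$^*$-continuous map $\al_{x^{-1}}$ on $M$), it follows that $x.p=p$. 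Thus $p$ is a $G$-invariant state and $M$ is $G$-amenable.

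The main work, and the step I expect to be the principal obstacle, is $(ii)\Rightarrow(iii)$. This is the analogue of passing from a topologically invariant mean to a net witnessing Reiter's condition, i.e. the convexity/Namioka-type argument. The strategy: let $q\in S(M)$ be topologically invariant. Since $S_*(M)$ is weak$^*$-dense in $S(M)$, there is a net $(a_i)$ in $S_*(M)$ converging weak$^*$ to $q$. Then for each $\mu\in P(G)$, $a_i.\mu - a_i \to q.\mu - q = 0$ weak$^*$ in $M_*$ (using that $M_*$ is a Banach $L^1(G)$-module and the action is compatible with the embedding $M_*\hookrightarrow M^*$). Now fix $C\in\mathcal{C}(G)$ and choose finitely many $\mu_1,\dots,\mu_n\in P(G)$; consider the map $\Phi\colon M_* \to (M_*)^n$, $\Phi(a)=(a.\mu_1-a,\dots,a.\mu_n-a)$. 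On the convex set $S_*(M)$ this map has $0$ in the weak closure of its range (by the above, applied coordinatewise), hence — since norm closure and weak closure of a convex set coincide — $0$ is in the norm closure of $\Phi(S_*(M))$. So there exists $a\in S_*(M)$ with $\sum_j\norm{a.\mu_j - a}$ small; choosing the $\mu_j$ to form a suitable partition-of-unity-type family subordinate to $C$ (or, more simply, exploiting that $x.a - a = \de_x.a - a$ can be made uniformly small for $x\in C$ by a standard argument comparing $x.a$ with $\mu.a$ for a fixed $\mu$ supported near $x$), one deduces $\norm{x.a-a}<\eps$ for all $x\in C$. The delicate point is the last reduction from ``$a.\mu-a$ small for finitely many $\mu$'' to ``$x.a-a$ small uniformly over the compact set $C$''; this uses local compactness of $G$ and the norm continuity of $x\mapsto x.a$ on $M_*$ to replace the (possibly large) compact set $C$ by finitely many small neighborhoods and a finite family of probability measures supported in them, exactly as in the classical proof that topological (left) invariance implies Reiter's property $(P_1)$.
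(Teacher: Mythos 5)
Your (iii)$\Rightarrow$(i) and the first half of your (ii)$\Rightarrow$(iii) follow the paper's own route (a weak$^{*}$-cluster point of the net $a_{C,\eps}$; the Namioka/Day convexity argument), but your (i)$\Rightarrow$(ii) contains a genuine error. The identity $p(m.\mu)=\int_{G}f(x)\,p(m.x)\,d\la(x)$ is not available for a general state $p\in S(M)$: the map $x\mapsto m.x$ is only weak$^{*}$-continuous, so $m.\mu$ is defined as a weak$^{*}$ integral, i.e. by $\left<m.\mu,a\right>=\int f(x)\left<m.x,a\right>\,d\la(x)$ for $a\in M_{*}$ only, and a functional $p\in M^{*}$ that is not normal cannot be pulled inside such an integral. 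Moreover, the conclusion you draw --- that one may simply take $q=p$, so that every $G$-invariant state is automatically topologically invariant --- is false in general: already in the special case $M=L^{\infty}(G)$ with the translation action ((b), (c) of \S 3), an invariant mean on $L^{\infty}(G)$ of a non-discrete amenable group need not be topologically invariant. So this implication needs a real argument, and the paper supplies one following \cite[Proposition 1.7]{Paterson}: one first shows that for $m\geq 0$ the map $\nu\mapsto p(m.\nu)$ is constant on $P(G)$ (the interchanges of $p$ with integrals are legitimate there because for $\nu\in P(G)$ the map $x\mapsto (m.\nu).x=m.(\nu\ast\de_{x})$ is norm continuous, so the relevant integrals converge in norm), and then defines $q(n)=p(n.\nu_{0})$ for a fixed $\nu_{0}\in P(G)$; this $q$ is topologically invariant, but in general $q\neq p$.

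A smaller caution about the end of your (ii)$\Rightarrow$(iii): the uniformity over the compact set $C$ cannot come from the norm continuity of $x\mapsto x.a$, because the modulus of continuity depends on $a$, which runs through a net. The classical argument the paper invokes (\cite[Theorem 4.4]{Paterson}) obtains the uniformity by taking $a=g.b$ with $g\in P(G)$ fixed: then $\norm{x.a-x_{i}.a}\leq \norm{x\ast g-x_{i}\ast g}_{1}$, an estimate independent of $b$, and one only needs $\norm{(x_{i}\ast g).b-b}<\eta$ for finitely many $x_{i}$ (one of them $e$). With that correction, your outline of this step coincides with the paper's proof.
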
    
\begin{proof}
(iii) $\Rightarrow$ (i) Write the $a$ of (\ref{eq:reiter1}) as $a_{C,\eps}$.  Then in the natural way, $\{a_{C,\eps}\}$ is a net, and any weak$^{*}$-cluster point of this net in $S(M)$ is a $G$-invariant state.\\
(i) $\Rightarrow$ (ii) Let $p$ be an invariant state.  As in the proof of \cite[Proposition 1.7]{Paterson}, for $m\geq 0$ in $M$, the map $\nu\to p(m.\nu)$ is constant for all $\nu\in P(G)$, and for fixed $\nu_{0}\in P(G)$, the state $q$, where $q(n)=p(n.\nu_{0})$, is {\em topologically} invariant, i.e. for all $\mu\in P(G)$, $\mu .q=q$. \\ 
(ii) $\Rightarrow$ (iii) There exists a net $\{t_{\bt}\}$ in $S_{*}(M)$ such that $\widehat{t_{\bt}}\to q$ weak$^{*}$ in $M^{*}$, and the argument of Namioka (\cite[p.8]{Paterson}) then applies to give a net $\{a_{\de}\}$ in $S_{*}(M)$ such that $\norm{\mu .a_{\de}-a_{\de}}\to 0$ for all $\mu\in P(G)$.  Now let $C\in \mathcal{C}(G)$.  With $x_{i}$ $(1\leq i\leq n)$, $U, g, \eta>0$ as in the proof of \cite[Theorem 4.4]{Paterson}, choose $b\in S_{*}(M)$ so that 
$\norm{(x_{i}*g).b - b}<\eta$ for $1\leq i\leq n$.  Then as in the proof of that theorem, $a=g.b$ satisfies (\ref{eq:reiter1}). 
\end{proof}



We now discuss another kind of state on $M$ suggested by the work of Connes  (\cite[p.109]{Connes}).  We do not strictly need these states for the purposes of this paper but feel that they are of intrinsic interest. Suppose that the action 
$x\to \al_{x}$ of $G$ on $M$ is inner.  From \cite[Theorem 32.22]{HR2}, the sets
$L^{1}(G)M, ML^{1}(G)$ are normed closed subspaces of $M$.  Let $UC(G,M)$ be the norm closure of $L^{1}(G)M + ML^{1}(G)$ in $M$.  Obviously, $UC(G,M)$ is an $L^{1}(G)$-submodule of $M$ and also a $G$-submodule of $M$.  A state $p$ on $M$ is called a {\em $G$-hypertrace} (or just {\em hypertrace} when $G$ is understood) if $p(mf-fm)=0$ for all $m\in M, f\in L^{1}(G)$.  

\begin{proposition}    \label{prop:hyperamen}
Let the action $x\to \al_{x}$ of $G$ on $M$ be inner.   If there exists a $G$-hypertrace on $M$ then there exists a state on $M$ which is $G$-invariant on $UC(G,M)$.  If $M$ is $G$-amenable then there exists a $G$-hypertrace on $M$.  
\end{proposition}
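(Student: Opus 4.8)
The plan is to prove the two implications separately. The second ($G$-amenable $\Rightarrow$ hypertrace) is short; the first reduces, once $\al_{x}(fm)$ has been computed explicitly, to the hypertrace identity itself.

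\emph{Hypertrace $\Rightarrow$ invariant state on $UC(G,M)$.} Let $p$ be a $G$-hypertrace; I claim that $p$ itself is the required state, i.e. that $p$ is $G$-invariant on $UC(G,M)$. Since $UC(G,M)$ is invariant under each $\al_{x}$ (it is a $G$-submodule of $M$) and $\al_{x}$ is a $*$-automorphism of $M$, hence an isometry, the functional $n\mapsto p(\al_{x}(n))-p(n)$ is norm continuous on $UC(G,M)$, so it suffices to check that it vanishes on the dense subspace $L^{1}(G)M+ML^{1}(G)$. Take $n=fm$ with $f\in L^{1}(G)$, $m\in M$. Since $\al_{x}$ is conjugation, $\al_{x}(fm)=x(fm)x^{-1}$, and a short computation (using left invariance of Haar measure to get $x(fm)=(\de_{x}*f)m$) identifies this with $(xf)(mx^{-1})\in L^{1}(G)M$, where $xf=\de_{x}*f$ and $mx^{-1}$ is the right bimodule action of $x^{-1}$ on $m$. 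Applying the hypertrace identity with $M$-factor $mx^{-1}$ and $L^{1}(G)$-factor $xf$ gives $p(\al_{x}(fm))=p\big((mx^{-1})(xf)\big)$, and a further change of variables — in which the $x^{-1}$ and the translation in $xf=\de_{x}*f$ cancel, modular-function factors included — shows $(mx^{-1})(xf)=mf$. Hence $p(\al_{x}(fm))=p(mf)=p(fm)$, the last step once more by the hypertrace property. The case $n=mf$ is symmetric: $\al_{x}(mf)=(xm)(fx^{-1})\in ML^{1}(G)$ with $fx^{-1}=f*\de_{x^{-1}}$, one has $(fx^{-1})(xm)=fm$, and so $p(\al_{x}(mf))=p\big((fx^{-1})(xm)\big)=p(fm)=p(mf)$. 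Thus $p$ is $G$-invariant on $L^{1}(G)M+ML^{1}(G)$, and hence on $UC(G,M)$.

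\emph{$G$-amenable $\Rightarrow$ hypertrace.} Suppose $M$ is $G$-amenable and let $p$ be a $G$-invariant state, so $p\circ\al_{x}=p$ for every $x\in G$. In the inner case, for $t\in G$ and $m\in M$ one has $mt=\al_{t^{-1}}(tm)$, since $t^{-1}(tm)t=(t^{-1}t)mt=mt$ by associativity of the bimodule action; therefore $p(mt)=p(\al_{t^{-1}}(tm))=p(tm)$. Integrating this identity against $f\in L^{1}(G)$ — legitimate since $p$ is bounded and hence commutes with the Bochner integrals $mf=\int f(t)(mt)\,d\la(t)$ and $fm=\int f(t)(tm)\,d\la(t)$ — yields $p(mf)=p(fm)$, so $p$ is a $G$-hypertrace.

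\emph{The main obstacle} is nothing more than careful bookkeeping: one must keep apart the ``dot'' action $m.x=\al_{x^{-1}}(m)$, the inner bimodule actions $xm,mx$, and their integrated forms $fm,mf$, and must track the modular function through the changes of variable used to compute $\al_{x}(fm)$ and to cancel the $x^{\pm1}$'s. The underlying algebra — a $G$-invariant state annihilates the commutators $tm-mt$, and conjugating $fm$ by $x$ and cyclically permuting under $p$ returns $fm$ modulo such a commutator — is robust.
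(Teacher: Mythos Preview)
Your argument for the first implication is correct and in fact cleaner than the paper's: you show directly that a $G$-hypertrace $p$ is itself $G$-invariant on $UC(G,M)$, via the identities $\al_{x}(fm)=(xf)(mx^{-1})$, $(mx^{-1})(xf)=mf$ (and their symmetric versions), together with two applications of the hypertrace relation.  The paper instead runs a Namioka-type approximation to build a (possibly different) state as a weak$^{*}$ limit of normal states $a_{\de}$ satisfying $\norm{(a_{\de}x-xa_{\de})f}\to 0$; your route avoids this entirely.

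The second implication, however, has a genuine gap.  You assert that ``$p$ is bounded and hence commutes with the Bochner integrals $mf=\int f(t)(mt)\,d\la(t)$ and $fm=\int f(t)(tm)\,d\la(t)$''.  But these are \emph{not} Bochner integrals: by hypothesis the maps $t\mapsto tm$ and $t\mapsto mt$ are only weak$^{*}$-continuous from $G$ into $M$ (this is exactly what ``inner action'' assumes in the paper), and in general they are not norm continuous --- think of $M=L^{\infty}(G)$ under translation.  Hence $fm$ and $mf$ are defined only as weak$^{*}$ (Gelfand) integrals, i.e.\ via pairing with $M_{*}$, and an arbitrary state $p\in M^{*}\setminus M_{*}$ cannot be pulled inside such an integral; indeed $t\mapsto p(tm)$ need not even be measurable.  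So from $p(tm)=p(mt)$ for all $t\in G$ one cannot conclude $p(fm)=p(mf)$.  The paper's proof circumvents this precisely by passing to the Reiter net $\{a_{\de}\}\subset S_{*}(M)$ with $\norm{xa_{\de}-a_{\de}x}\to 0$ uniformly on compacta: for these \emph{normal} states the required integral identities hold, one obtains $\norm{fa_{\de}-a_{\de}f}\to 0$ for $f\in C_{c}(G)$, and the hypertrace is then the weak$^{*}$ limit $q$ of $\widehat{a_{\de}}$.  Your argument is fine when $G$ is discrete (where $fm,mf$ are genuine $\ell^{1}$-sums), which is consistent with the paper's Note following the proposition; for general $G$ you need the approximation step.
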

\begin{proof} Suppose first that there is a hypertrace on $M$.  Let $x_{i}$ ($1\leq i\leq n$) belong to $G$, $e$ one of the $x_{i}$'s, and let $f_{j}$ ($1\leq j\leq m$) belong to $C_{c}(G)$.  Let $\eps>0$.  By Namioka's argument, there exists 
$a\in S_{*}(M)$ such that 
\[  \norm{ax_{i}f_{j} - x_{i}f_{j}a}<\eps, \hspace{.2in} \norm{f_{j}x_{i}a - af_{j}x_{i}}<\eps         \]
for $1\leq i\leq n, 1\leq j\leq m$.  Then for any $i,j$,
\[  \norm{(ax_{i} - x_{i}a)f_{j}}=\norm{[a(x_{i}f_{j}) - (x_{i}f_{j})a] - x_{i}[af_{j} - f_{j}a]}
\leq 2\eps.\]
A similar argument applies to $\norm{f_{j}(ax_{i} - x_{i}a)}$.  It follows that there is a net $\{a_{\de}\}$ in 
$S_{*}(M)$ such that $\widehat{a_{\de}}\to p$ weak$^{*}$ for some $p\in M^{*}$ and such that 
$\norm{(a_{\de}x - xa_{\de})f}\to 0$, $\norm{f(a_{\de}x - xa_{\de})}\to 0$ for all $x\in G, f\in C_{c}(G)$.  Then for 
$m\in M$, $\left|p((mf)x - x(mf))\right|= \lim_{\de}\left|\widehat{a_{\de}}((mf)x - x(mf))\right|
=\lim_{\de}\left|(fxa_{\de} - fa_{\de}x)(m)\right|\leq \norm{fxa_{\de} - fa_{\de}x}\norm{m}\to 0$.  Similarly, \
$p((fm)x - x(fm))=0$.  So for $n\in UC(G,M)$, we have $p(nx-xn)=0$.  So there exists a $G$-invariant state on $UC(G,M)$.

Suppose now that $M$ is $G$-amenable.  Using Reiter's condition (\ref{eq:reiter1}), there exists a net $\{a_{\de}\}$ in 
$S_{*}(M)$ such that $\widehat{a_{\de}}\to q$ for some $G$-invariant state $q$ on $M$ and such that 
$\norm{xa_{\de} - a_{\de}x}\to 0$ uniformly on compact subsets of $G$.  Let $f\in C_{c}(G)$ with compact support $C$.  Let $\eps>0$.  Then there exists $\de_{0}$ such that for all $\de\geq \de_{0}$ and all $x\in C$, we have
$\norm{xa_{\de} - a_{\de}x}<\eps$.  Then for all $\de\geq \de_{0}$,
\beqns
\norm{fa_{\de} - a_{\de}f} & = & \norm{\int_{C} f(x)[xa_{\de} - a_{\de}x]\,d\la(x)}   \\
&\leq & \int_{C}\left|f(x)\right|\norm{xa_{\de} - a_{\de}x}\,d\la(x)   \\
&\leq & \eps\norm{f}_{1}.
\eeqns
So for all $f\in C_{c}(G)$, we have $\norm{fa_{\de} - a_{\de}f}\to 0$.  So for $m\in M$,  we have
$\left|q(fm - mf)\right|=\left|\lim(fm - mf)(a_{\de})\right| = 
\lim\left|m(a_{\de}f - fa_{\de})\right|=0$. So $q$ is a hypertrace on $M$.
\end{proof} 

\noindent
{\bf Note} For a discrete group $G$, it is trivially true that $UC(G,M)=M$, and so the existence of a hypertrace on $M$ is equivalent to $G$-amenability for $M$.  In fact, in that case, a state on $M$ is a $G$-hypertrace if and only if it is $G$-invariant.  But in general, the existence of a hypertrace on $M$ is strictly weaker than $G$-amenability.  For example, consider the case where $M=L^{\infty}(G)$ under the conjugation action.  Then every $(mf-fm)$ is bounded and continuous on $G$.  Let $\de_{e}$ be the point mass at $e$.  Then 
$\de_{e}(mf-fm)=m(f)-m(f)=0$, and so any extension of the point mass $\de_{e}$ on $C(G)$ to a state on $L^{\infty}(G)$ is a hypertrace on $M$.  But $M$ need not be $G$-amenable.  (For example, by \cite{Losrind}, any connected, non-amenable $G$ is not inner amenable.)


\section{Fixed-point characterizations for $G$-amenability}

There seems in general to be no hope of a fixed point theorem of the classical kind for $G$-amenability.  Indeed, consider the special case where $M=L^{\infty}(G)$ and the action of $G$ is given by conjugation (\S 3).  Then $G$-amenability for $M$ is just inner amenability for $G$.  A classical fixed point theorem in that case would presumably be of the form: {\em if $G$ is inner amenable and X is a certain kind of right Banach $G$-module, then every left invariant, weak$^{*}$ compact convex subset $K$ of $X^{*}$ has a $G$-fixed point}.  The case $X=L^{\infty}(G)$ ($G$ inner amenable), with conjugation as the action, would be a natural choice for such a right Banach $G$-module since there is an inner invariant mean in $X^{*}$.  However, in that situation, there can exist $K$'s in $X^{*}$ that don't contain a $G$-fixed point.  For example, suppose that G is discrete and that the only inner invariant mean on
$\ell^{\infty}(G)$ is the trivial one $\delta_{e}$ (e.g. $G$ could be $F_{2}$ (\cite{Effros})).  Then we could take K to be the set of means $m\in X^{*}$ for which $m(\chi_{e})=0$.  So we have to look in a different direction for the appropriate version of a fixed-point theorem for $G$-amenability.  We adapt the approach of \cite{LauP} for inner amenability.

Now let $M$ be a von Neumann algebra with an action $x\to \al_{x}$ of $G$ (as in \S 3.)  Let $A=M_{*}$ and 
$\bt_{x}(a)=(\al_{x^{-1}})_{*}$.  We now describe the class of $M$'s for which we are able to give a fixed-point characterization of $G$-amenability.  We will require the following properties (a), (b) for $A$:\\
\bi
\item[(a)] $A$ is a Banach algebra;
\item[(b)] $\bt_{x}$ is an isomorphism of $A$ for each $x\in G$.
\ei
So given these properties, the map $x\to \bt_{x}$ is an action of $G$ on $A$.  We note that if $\bt$ is inner on $A$, then the action $\al$ of $G$ on $M$ is also inner.

Now assume that $A$ satisfies (a) and (b), and let $X$ be a left Banach $G-A$-module (\S 2).  Then in the usual way, $X^{**}$ is a left $G$-module: for $\xi\in X, f\in X^{*}, n\in X^{**}$, we have $xn(f)=n(fx)$ where $fx(\xi)=f(x\xi)$.  So for each $x\in G$, there is a bounded operator $T_{x}\in B(X^{**})$ given by: $T_{x}m=xm$.  Also, since $X$ is a left Banach $A$-module, there is for each $a\in A$, an operator $T_{a}\in B(X^{**})$ such that $T_{a}n=an=\hat{a}n$.  (As we saw in \S 2, $T_{a}$ makes sense in the natural way for $a\in A^{**}$.)  The next result follows by dualizing the equality 
$xax^{-1}\xi=\bt_{x}(a)\xi$.  

\begin{proposition}        \label{prop:alxta}
For all $x\in G, a\in A$,  
\begin{equation}          \label{eq:alxta}
T_{x}T_{a}T_{x^{-1}}=T_{\bt_{x}(a)}.
\end{equation}
\end{proposition}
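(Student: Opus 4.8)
The plan is to verify the operator identity by evaluating both sides on an arbitrary $m\in X^{**}$ against an arbitrary $f\in X^{*}$, and then to trace everything back to the module identity $xax^{-1}\xi=\bt_{x}(a)\xi$ that is built into the definition of a left Banach $G-A$-module.

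First I would unwind the definitions of $T_{x}$ and $T_{a}$ on $X^{**}$. Recall that for $g\in X^{*}$, $\xi\in X$ we have $gx(\xi)=g(x\xi)$ and $ga(\xi)=g(a\xi)$, and that $T_{x}$, $T_{a}$ act on $X^{**}$ by $(T_{x}m)(g)=m(gx)$ and $(T_{a}m)(g)=m(ga)$. Applying these in turn — first $T_{x^{-1}}$, then $T_{a}$, then $T_{x}$ — while keeping track of the order in which the right actions get attached, one obtains
\[
(T_{x}T_{a}T_{x^{-1}}m)(f)=m\bigl(((fx)a)x^{-1}\bigr).
\]
On the other side, $(T_{\bt_{x}(a)}m)(f)=m\bigl(f\bt_{x}(a)\bigr)$. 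So the proposition reduces to the single assertion that for every $f\in X^{*}$,
\[
((fx)a)x^{-1}=f\bt_{x}(a)\qquad\text{in } X^{*}.
\]

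To establish this equality of functionals I would evaluate both sides at an arbitrary $\xi\in X$. Peeling off the right actions one at a time, the left-hand side unwinds to $f\bigl(x(a(x^{-1}\xi))\bigr)$, i.e. $f(xax^{-1}\xi)$ in the shorthand of \S2, while the right-hand side is $f(\bt_{x}(a)\xi)$. These agree precisely because $X$ is a left Banach $G-A$-module, which by definition requires $\bt_{x}(a)\xi=xax^{-1}\xi$ for all $x\in G$, $a\in A$, $\xi\in X$. That completes the argument.

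The only point requiring genuine care — and the one place an order slip would be fatal — is bookkeeping the direction of the dualized actions: $T_{x}$ encodes the left action $\xi\mapsto x\xi$ but operates on $X^{**}$ via the right action $g\mapsto gx$ on $X^{*}$, so composing $T_{x}T_{a}T_{x^{-1}}$ produces the right multiplications in the reversed, nested order $((fx)a)x^{-1}$. Once that is pinned down the computation is purely formal. As a consistency check one may also note that $(fx)x^{-1}=f$, so $T_{x^{-1}}=T_{x}^{-1}$, in agreement with $x\mapsto T_{x}$ being a group action on $X^{**}$.
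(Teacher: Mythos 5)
Your proof is correct and follows exactly the route the paper indicates: the paper disposes of Proposition~\ref{prop:alxta} with the remark that it "follows by dualizing the equality $xax^{-1}\xi=\bt_{x}(a)\xi$," and your computation is precisely that dualization written out, with the order of the induced right actions on $X^{*}$ tracked correctly.
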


Now canonically, $B(X^{**})$ is the dual Banach space $(X^{**}\widehat{\otimes}X^{*})^{*}$: $T\in B(X^{**})$ is identified with the functional on $X^{**}\widehat{\otimes}X^{*}$ determined by: $m\otimes w\to \left<Tm,w\right>$.  Let $P_{X}$ be the weak$^{*}$ closure in $B(X^{**})$ of $\{T_{a}: a\in S_{*}(M)\}$.  

\begin{definition}            \label{def:fpp}
The Banach algebra $A$ has {\em the $G$-fixed point property} if for any left Banach $G-A$-module $X$, there exists $T_{0}\in P_{X}$ such that for all $x\in G$,
\begin{equation}        \label{eq:tg0}
T_{x}T_{0}T_{x^{-1}}=T_{0}.    
\end{equation}
A left Banach $G-A$-module $X_{0}$ is said to have a {\em $G-A$-fixed point} if there exists $T_{0}\in P_{X_{0}}$ such that (\ref{eq:tg0}) holds.
\end{definition}

What will correspond to the fixed-point property for $G$-amenability for $M$ will be the $G$-fixed point property for $A$.
In connection with this, it is helpful to consider the classical amenability fixed-point theorems, e.g. \cite[Chapter 2]{Paterson}.  As an illustration, $G$ is amenable $\iff$ every affine, jointly continuous compact convex $G$-set has a $G$-fixed point $\iff$ there is a $G$-invariant mean on $L^{\infty}(G)$ $\iff$ there is a $G$-invariant mean on $LUC(G)$. 
There is a similar pattern for the $G$-fixed point property.  Amenability corresponds to $G$-amenability and the jointly continuous result to the $G$-fixed point property for $A$.   We now look for special $X$'s in the $G$-amenable case corresponding to $(L^{\infty}(G))_{*}=L^{1}(G)$ and to a substitute for (the non-existent) ``$(LUC(G))_{*}$''.  For the first, we take the ``largest'' natural Banach algebra that is a left Banach $G-A$-module; this is $L^{1}(G,A^{1})$.  For the second, we take the ``smallest'' natural Banach algebra that is a left Banach $G-A$-module.  This is $A^{1}$.  In both cases, we require the existence of a $G-A$-fixed point.  One might ask: why not use $A$ in place of $A^{1}$?  The reason why we require the identity to be adjoined to $A$ is that the existence of a $G-A$-fixed point for $A$ does not in general imply $G$-amenability for $M$.  For example, consider the case where $M$ is general and we take $A^{2}=\{0\}$: then $A$ is a left Banach $G-A$-module, and (a) and (b) are satisfied.   Trivially,  (\ref{eq:tg0}) holds for $X=A$ with $T_{0}=0$, so that $A$ has a $G-A$-fixed point.  But $M$ need not be $G$-amenable.  

We note that $A^{1}$ is a left Banach $G-A$-module. Here $A^{1}$ is the Banach algebra obtained by adjoining an identity $1$ to $A$: so $\norm{a+c1}=\norm{a}+\left|c\right|$, and canonically, $A^{1}=A\oplus \C 1$.  The action $x\to \bt_{x}$ of $G$ on $A$ extends to an action, also denoted $x\to \bt_{x}$, on $A^{1}$ by taking $\bt_{x}(1)=1$ for all $x\in G$.   The left Banach $G$-module structure on $A^{1}$ is given by: $xb=\bt_{x}(b)$ (cf. \cite[Example 1.2]{Stokke}).  Then $A^{1}$ is a left Banach $G-A$-module since
\[       xax^{-1}b=x(a\bt_{x^{-1}}(b))=\bt_{x}[a\bt_{x^{-1}}(b)]=\bt_{x}(a)b.       \]
From \S 2, $L^{1}(G,A^{1})$ is also a left Banach $G-A^{1}$-module, and hence {\em a fortiori} a left Banach 
$G-A$-module.  We now give our fixed-point theorem.

\begin{theorem}   \label{th:fpt}
Let $A$ satisfy (a) and (b).  Then the following statements are equivalent:
\bi
\item[(i)] $M$ is $G$-amenable; 
\item[(ii)] $A$ has the $G$-fixed point property;
\item[(iii)] $A^{1}$ has a $G-A$-fixed point;
\item[(iv)] $L^{1}(G,A^{1})$ has a $G-A$-fixed point.
\ei  
\end{theorem}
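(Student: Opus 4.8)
The plan is to prove the cycle of implications $(i) \Rightarrow (iv) \Rightarrow (iii) \Rightarrow (ii) \Rightarrow (i)$. The key conceptual point is that a $G-A$-fixed point $T_0 \in P_X$ is, for suitable test modules $X$, nothing but an encoding of a $G$-invariant state on $M$: the set $\{T_a : a \in S_*(M)\}$ is the image of the normal states, and its weak$^*$-closure $P_X$ should consist of operators $T_\phi$ coming from (not necessarily normal) states $\phi$ acting through the module structure, with the conjugation condition \eqref{eq:tg0} translating via Proposition~\ref{prop:alxta} into $\phi \circ \al_{x^{-1}} = \phi$. The hardest implication is $(ii) \Rightarrow (i)$, or more precisely producing a single invariant state on all of $M$ from a fixed point in a cleverly chosen module, so I would route the argument so that the substantive work happens in $(i) \Rightarrow (iv)$ and in $(ii) \Rightarrow (i)$, with the two middle implications being essentially formal restrictions.

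For $(i) \Rightarrow (iv)$: assuming $M$ is $G$-amenable, apply Proposition~\ref{prop:amen} to get, for each $C \in \mathcal{C}(G)$ and $\eps > 0$, an $a = a_{C,\eps} \in S_*(M)$ with $\|x.a - a\| < \eps$ for $x \in C$. I would first upgrade this, as in the proof of Proposition~\ref{prop:hyperamen}, to a net $\{a_\de\}$ in $S_*(M)$ with $\|x a_\de - a_\de x\| \to 0$ uniformly on compacta (using the inner structure of $L^1(G,A^1)$ as a $G-A$-module, where $A = M_*$). Acting on $X = L^1(G,A^1)$, the operators $T_{a_\de}$ lie in the weak$^*$-compact set (unit ball of $B(X^{**})$), so pass to a weak$^*$-cluster point $T_0 \in P_X$. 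The Reiter-type estimate forces $\|T_x T_{a_\de} T_{x^{-1}} - T_{a_\de}\| \to 0$ in the relevant sense — here one checks that $T_x T_{a_\de} T_{x^{-1}} = T_{x a_\de x^{-1}}$ by Proposition~\ref{prop:alxta} and that $\|x a_\de x^{-1} - a_\de\|_A$ is controlled by $\|x a_\de - a_\de x\|$ — and hence $T_0$ satisfies \eqref{eq:tg0}. The routine but slightly delicate point is that weak$^*$-convergence of $T_{a_\de}$ together with norm-smallness of the commutators passes to the conjugation identity for the limit; this is a standard $\epsilon/3$ argument on the duality pairing with $X^{**} \widehat{\otimes} X^*$.

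The implication $(iv) \Rightarrow (iii)$ should be handled by restricting a $G-A$-fixed point on $L^1(G,A^1)$ to the submodule generated by the copy of $A^1$ sitting inside the multiplier algebra, or more cleanly by noting that $A^1$ is a quotient (or complemented summand) of $L^1(G,A^1)$ as a $G-A$-module via evaluation/augmentation, so the fixed point descends; I would state this as a lemma about functoriality of $P_X$ under $G-A$-module morphisms. The implication $(iii) \Rightarrow (ii)$ is the genuinely soft step: given the specific $G-A$-fixed point on $A^1$, one shows it forces $M$ to be $G$-amenable (going through $(i)$), whence by the already-established $(i) \Rightarrow (iv) \Rightarrow (iii)$ and a general-module version of that same argument one gets the full $G$-fixed point property $(ii)$; alternatively prove $(iii) \Rightarrow (i)$ directly. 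For $(iii) \Rightarrow (i)$ (equivalently the heart of $(ii) \Rightarrow (i)$): a $T_0 \in P_{A^1}$ satisfying \eqref{eq:tg0} is a weak$^*$-limit of $T_{a_\de}$, $a_\de \in S_*(M)$; evaluate at the element $1 \in A^1$ paired against a functional to extract a state $p$ on $M$ as a weak$^*$-cluster point of $\{\widehat{a_\de}\}$ in $S(M)$, and use that $A^1$ is an \emph{inner} $G-A$-module with $x \cdot 1 = 1$ so that the conjugation identity \eqref{eq:tg0} collapses, on the $\C 1$ summand, precisely to the invariance $(\al_{x^{-1}})^*(p) = p$. Here the reason $A$ (rather than $A^1$) fails is exactly the one flagged in the text: without the adjoined identity there is no element on which to "read off" the state, as the $A^2 = \{0\}$ example shows.

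The main obstacle I anticipate is the bookkeeping around $P_X$ and the weak$^*$-topology on $B(X^{**}) = (X^{**}\widehat{\otimes} X^*)^*$: one must be careful that $P_X$ genuinely contains the cluster points produced, that the maps $T \mapsto T_x T T_{x^{-1}}$ are weak$^*$-continuous (they are, being pre-adjoints of bounded maps on $X^{**}\widehat{\otimes}X^*$), and that the identifications of $(L^1(G,A))^{**}$ and $(A^1)^{**}$ with their Arens-product algebras are compatible with the $G$-action as set up in Section~2. None of these is deep, but getting the adjoints and the order of the Arens products right is where errors creep in; I would isolate the needed continuity and functoriality facts as a short preliminary lemma before running the four implications.
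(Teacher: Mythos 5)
Your proposal is essentially correct, and on the two substantive implications it coincides with the paper's own proof: you get (i)$\Rightarrow$(ii)/(iv) by taking the Reiter net $\{a_{\de}\}$ of Proposition~\ref{prop:amen}(iii), passing to a weak$^{*}$-cluster point $T_{0}\in P_{X}$, and using Proposition~\ref{prop:alxta} together with $\norm{T_{x}T_{a_{\de}}T_{x^{-1}}m-T_{a_{\de}}m}\leq\norm{\bt_{x}(a_{\de})-a_{\de}}\norm{m}$; and your (iii)$\Rightarrow$(i) is exactly the paper's evaluation at $\hat{1}$, using $\bt_{x}(1)=1$ and the computation $\hat{1}(f+\mu p)=f$ so that $T_{0}\hat{1}$ equals the cluster state $n_{0}$ and (\ref{eq:tg0}) collapses to $\bt_{x}^{**}(n_{0})=n_{0}$. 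Where you genuinely differ is the treatment of (iv): the paper proves (iv)$\Rightarrow$(i) directly, applying the averaging map $Q(f)=\int f(t)\,d\la(t)$ to the weakly null net $\bt_{x}(a_{\si})f-a_{\si}f$ after a Namioka/Mazur convexity upgrade to norm convergence, and then evaluating at $f=g\otimes 1$ to get $\norm{\bt_{x}(a_{\si})-a_{\si}}\to 0$; you instead reduce (iv) to (iii) by noting that $Q$ is a surjective $G-A$-module morphism onto $A^{1}$ and pushing the invariant element of $P_{L^{1}(G,A^{1})}$ forward. That route does work, and is arguably cleaner: since $Q^{**}$ is surjective and intertwines the $T_{a}$'s and the $T_{x}$'s, the formula $\left<T_{0}'n,g\right>=\left<T_{0}m,Q^{*}g\right>$ (for any $m$ with $Q^{**}m=n$) defines an invariant element of $P_{A^{1}}$ -- but this functoriality lemma is precisely where the work lies and must be written out; it is what replaces the paper's Namioka argument. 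Two cautions on your write-up: in (i)$\Rightarrow$(iv) you invoke commutator norms $\norm{xa_{\de}-a_{\de}x}$ and ``inner structure,'' but Theorem~\ref{th:fpt} does not assume the action is inner, so these expressions are undefined in general; the quantity you need is simply $\norm{\bt_{x}(a_{\de})-a_{\de}}\to 0$, which Proposition~\ref{prop:amen} supplies directly, and no hypertrace-style upgrade is needed (similarly, calling $A^{1}$ an ``inner'' module is loose -- all that is used is $\bt_{x}(1)=1$). Also, $A^{1}$ is not a complemented $G-A$-summand of $L^{1}(G,A^{1})$ in general, since the natural section $b\mapsto g\otimes b$ is not $G$-equivariant; only the quotient statement via $Q$ is available, so your ``or'' should be resolved in that direction.
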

\begin{proof}
We show: (i)$\Rightarrow$ (ii),  (ii)$\Rightarrow$ (iii), (iii)$\Rightarrow$ (i), (ii)$\Rightarrow$ (iv) and (iv)$\Rightarrow$ (i).

Suppose then that $M$ is $G$-amenable and let $X$ be a left Banach $G-A$-module.  By Proposition~\ref{prop:amen}, there exists a net $\{a_{\de}\}$ in $S_{*}(M)$ such that $\norm{\bt_{x}(a_{\de})-a_{\de}}\to 0$ for all $x\in G$.  We can assume that $T_{a_{\de}}\to T_{0}$ weak$^{*}$ for some $T_{0}\in P_{X}$.  Then as in a proof of Xu's (\cite[Theorem 11]{Xu}) for $m\in X^{**}$, and using Proposition~\ref{prop:alxta}, 
\[ \norm{T_{x}T_{a_{\de}}T_{x^{-1}}m - T_{a_{\de}}m}=\norm{\bt_{x}(a_{\de})m-a_{\de}m}\leq 
\norm{\bt_{x}(a_{\de})-a_{\de}}\norm{m}\to 0 \]
and (\ref{eq:tg0}) follows.  So $A$ has the $G$-fixed point property and (i)$\Rightarrow$ (ii).  The implication 
(ii)$\Rightarrow$ (iii) is trivial.

Now suppose that the left Banach $G-A$-module $Y=A^{1}$ has a $G-A$-fixed point.  Then there exists $T_{0}\in P_{Y}$ such that
$T_{x}T_{0}T_{x^{-1}}=T_{0}$ for all $x\in G$.  By assumption, there exists a net $\{T_{a_{\si}}\}$, where the $a_{\si}\in S_{*}(M)$, such that $T_{a_{\si}}\to T_{0}$ weak$^{*}$ in $B(Y^{**})$.  We can suppose that $\widehat{a_{\si}}\to n_{0}$ weak$^{*}$ for some $n_{0}\in S(M)$.  We now show that $T_{0}\hat{1}=n_{0}$.  To this end, we have $Y=A\oplus \C 1$, and so can identify $Y^{*}$ as the direct sum $A^{*}\oplus \C p$ where for $f\in A^{*}$, $b\in A$ and $\mu, \nu\in \C$, $f+\mu p\in (A^{1})^{*}$ is given by: $(f+\mu p)(b+\nu 1)=f(b) + \mu\nu 1$.  The norm on $Y^{*}$ is given by:
$\norm{f+\mu p}=\max\{\norm{f},\left|\mu\right|\}$.  Then $Y^{**}$ is canonically identified with the Banach space direct sum $A^{**}\oplus \C\hat{1}$.  We will regard $A^{*}, A^{**}$ as subspaces of $Y^{*}, Y^{**}$ respectively.  Now  
$\left<T_{0}\hat{1},f+\mu p\right>=\lim \left<\widehat{a_{\si}}\hat{1}, f+\mu p\right>
=\lim\left<\widehat{a_{\si}},\hat{1}(f+\mu p)\right>=n_{0}(\hat{1}(f+\mu p))$.  The equality $T_{0}\hat{1}=n_{0}$ will follow once we have shown that $\hat{1}(f+\mu p)=f\in A^{*}$.  For this, for any $a\in A$, 
$\hat{1}(f+\mu p)(a)=\hat{1}((f+\mu p)a)$ and $((f+\mu p)a)(b+\nu 1)=(f+\mu p)(ab+\nu a)=f(ab)+\nu f(a)$, and it follows that 
$(f+\mu p)a=fa + f(a)p$ (where $fa\in A^{*}$).  So $\hat{1}(f+\mu p)(a)=\widehat{1}(fa+f(a)p)=f(a)$ giving
$\hat{1}(f+\mu p)=f$ as required.  Last, 
\[  n_{0}=T_{0}\hat{1}=T_{x}T_{0}T_{x^{-1}}\hat{1}=T_{x}T_{0}\widehat{\bt_{x^{-1}}(1)}=T_{x}T_{0}\hat{1}=\bt_{x}^{**}(n_{0}) \]
and so $M$ is $G$-amenable.   So (iii)$\Rightarrow$ (i).  (ii)$\Rightarrow$ (iv) is trivial.

It remains to show that (iv) $\Rightarrow$ (i).   So assume that $X=L^{1}(G,A^{1})$ has a $G-A$-fixed point.  We recall 
(\S 2) that $X=L^{1}(G,A^{1})$ is a left Banach $G-A$-module under the following operations, where for $x,t\in G$, $f\in X$ and $a\in A$, we have $xf(t)=\bt_{x}(f(x^{-1}t))$ and $(af)(t)=a(f(t))$.  Since $A$ has the $G$-fixed point property, there exists $T_{0}\in P_{X}$ such that (\ref{eq:tg0}) holds for all $x$.  By assumption, there exists a net $\{T_{a_{\si}}\}$, where the $a_{\si}\in S_{*}(M)$, such that 
$T_{a_{\si}}\to T_{0}$ weak$^{*}$ in $B(X^{**})$.  Then for each $m\in X^{**}$ and $x\in G$, we have
\begin{equation}          \label{eq:asigma}
xa_{\si}x^{-1}m - a_{\si}m\to 0
\end{equation}
weak$^{*}$ in $X^{**}$.

Now define a map $Q:C_{c}(G,A^{1})\to A^{1}$ by setting: $Q(f)=\int f(t)\,d\la(t)$.  Then $Q$ is linear and continuous with norm $\leq 1$ and so extends to a linear map of norm $1$, also denoted by $Q:X\to A^{1}$.  Note that for $a\in A$, $Q(af)=aQ(f)$.  Identify $f\in X$ with its image $\hat{f}\in X^{**}$.  From (\ref{eq:asigma}) and (\ref{eq:btxax}), for each 
$x\in G$ and $f\in X$,
\[   \bt_{x}(a_{\si})f - a_{\si}f=xa_{\si}x^{-1}f - a_{\si}f\to 0        \]
weakly in $X$.  Using the Namioka argument, we can assume that $\norm{\bt_{x}(a_{\si})f - a_{\si}f}\to 0$ in $X$ for all $x\in G, f\in X$.  Choose $f=g\otimes 1$ where $g\in P(G)$.  Then $Q(f)=1$, and by the continuity of $Q$,
\[  (\bt_{x}(a_{\si}) - a_{\si})1 = Q(\bt_{x}(a_{\si})f - a_{\si}f)\to 0               \]
in norm in $A^{1}$.  So $(\bt_{x}(a_{\si}) - a_{\si})\to 0$ in norm in $A$.  Recalling that 
$\bt_{x}=(\al_{x^{-1}})_{*}$, we have $x.a(a)=\bt_{x}(a)$ for all $a\in A$, and applying Proposition~\ref{prop:amen} (with $G$ as discrete) we have that 
$M$ is $G$-amenable.
\end{proof}
\begin{corollary}          \label{cor:fpt}
$A$ has the $G$-fixed point property if and only if it has that property when $G$ is given the discrete topology.
\end{corollary}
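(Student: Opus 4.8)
The plan is to derive the corollary directly from Theorem~\ref{th:fpt}, exploiting the fact that $G$-amenability of $M$ does not see the topology of $G$. Throughout, write $G_{d}$ for the group $G$ equipped with the discrete topology.

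One implication is essentially formal. Every left Banach $G-A$-module is in particular a left Banach $G_{d}-A$-module (for $G_{d}$ the norm-continuity requirement on the map $x\to x\xi$ is vacuous), and the operator family $P_{X}$ together with the maps $T_{x}$, $T_{a}$ is defined in exactly the same way in either case; hence if $A$ has the $G$-fixed point property when $G$ carries the discrete topology, then it has the $G$-fixed point property as originally formulated.

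For the reverse implication I would argue as follows. First, observe that the hypotheses of Theorem~\ref{th:fpt} are topology-free: the homomorphism $x\to \al_{x}$ into $Aut(M)$ is automatically an action of $G_{d}$ on $M$, since the weak$^{*}$-continuity of $x\to \al_{x}(m)$ is automatic when $G_{d}$ is discrete, and properties (a), (b) for $A=M_{*}$ together with the induced map $x\to \bt_{x}=(\al_{x^{-1}})_{*}$ involve only the group elements, not the topology. So Theorem~\ref{th:fpt} applies verbatim with $G$ replaced by $G_{d}$. Second, note that $M$ is $G$-amenable if and only if $M$ is $G_{d}$-amenable: by Definition~\ref{def:Gamenable} both statements assert the existence of a state $p\in S(M)$ with $(\al_{x^{-1}})^{*}(p)=p$ for every group element $x$, and $G$ and $G_{d}$ share the same underlying set and the same automorphisms $\al_{x}$. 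Now chain the equivalence (i)$\iff$(ii) of Theorem~\ref{th:fpt}, applied first to $G$ and then to $G_{d}$: $A$ has the $G$-fixed point property $\iff$ $M$ is $G$-amenable $\iff$ $M$ is $G_{d}$-amenable $\iff$ $A$ has the $G_{d}$-fixed point property.

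There is no genuine obstacle to overcome; the argument is a two-line combination of Theorem~\ref{th:fpt} with the topology-independence of Definition~\ref{def:Gamenable}. The one point worth flagging is the asymmetry noted above — the class of left Banach $G_{d}-A$-modules is a priori strictly larger than the class of left Banach $G-A$-modules — which is precisely why the corollary is not a tautology: its content is that, by Theorem~\ref{th:fpt}, enlarging the module class in this way makes no difference to the fixed-point property.
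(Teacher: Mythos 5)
Your proposal is correct and is essentially the paper's own argument: the paper proves the corollary in one line by noting that $G$-amenability of $M$ is independent of the topology on $G$, and then (implicitly) applying the equivalence (i)$\iff$(ii) of Theorem~\ref{th:fpt} for both topologies, exactly as in your chain. Your additional remark about the a priori larger class of $G_{d}-A$-modules is a fair observation but is already subsumed by running the equivalence in both directions.
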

\begin{proof}
$G$-amenability for $M$ is independent of the topology on $G$.
\end{proof}  

It is usually desirable to assume a bounded approximate identity in a Banach algebra $A$.  But, as we will see below, in important examples, the Banach algebra $A=M_{*}$ does not have a bounded approximate identity, and this is why, in Theorem~\ref{th:fpt}, a bounded approximate identity in $A$ is not assumed.   We also do not assume in that theorem that the action $\bt$ of $G$ on $A$ is inner.  Indeed, the inner condition gives that $A$ is a Banach $G$-module, but the $G$-module action does not extend in any natural way to its unitization $A^{1}$, which is used in the theorem.  On the other hand, the action $x\to \al_{x}$ {\em does} always extend trivially to $A^{1}$, and this is why we need only assume in the theorem the presence of a $G$-action on $A$. 

However, for some applications, we are in the situation where $A$ has a bounded approximate identity $\{e_{\de}\}$ and the action of $G$ on $A$ is inner.  We now show that in that situation, we do not have to unitize $A$, and the $G$-fixed point property is required only for {\em essential} left Banach $A$-modules.   (Recall (\S 2) that if $X$ is an essential left Banach $A$-module, then it is automatically a left Banach $G-A$-module.)  

\begin{theorem}     \label{th:inner}
Let $A$ satisfy (a) and (b), have a bounded approximate identity and be such that the action $\bt$ of $G$ on $A$ is inner.  Then the following statements are equivalent:
\bi
\item[(i)] $M$ is $G$-amenable;
\item[(ii)] $A$ has the $G$-fixed point property for essential left Banach $A$-modules;
\item[(iii)] $A$ has a $G-A$-fixed point;
\item[(iv)] $L^{1}(G,A)$ has a $G-A$-fixed point.
\ei
\end{theorem}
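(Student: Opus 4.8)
The plan is to mirror the scheme of Theorem~\ref{th:fpt}, but to exploit the bounded approximate identity $\{e_{\de}\}$ and the inner action so that the unitization $A^{1}$ can be dropped everywhere in favour of $A$ itself, and the class of test modules can be cut down to the essential left Banach $A$-modules. I would again prove the cycle by establishing (i)$\Rightarrow$(ii), (ii)$\Rightarrow$(iii), (iii)$\Rightarrow$(i), (ii)$\Rightarrow$(iv), (iv)$\Rightarrow$(i). For (i)$\Rightarrow$(ii): given an essential left Banach $A$-module $X$, recall from \S2 that $X$ is automatically a left Banach $G-A$-module with $x\xi=\lim_{\de}(xe_{\de})\xi$; then the argument is verbatim that of (i)$\Rightarrow$(ii) in Theorem~\ref{th:fpt} --- use Proposition~\ref{prop:amen} to get a net $\{a_{\de}\}\subset S_{*}(M)$ with $\norm{\bt_{x}(a_{\de})-a_{\de}}\to 0$ for all $x$, pass to a weak$^{*}$-cluster point $T_{0}\in P_{X}$ of $\{T_{a_{\de}}\}$, and apply Xu's estimate together with Proposition~\ref{prop:alxta}. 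The implication (ii)$\Rightarrow$(iii) needs the observation that $A$ itself, viewed as a left Banach $A$-module under left multiplication, is essential: indeed $A^{2}=A$ since $A$ has a bounded approximate identity, and the inner action makes $A$ a left Banach $G-A$-module (via (\ref{eq:btxax}) with $A$ in place of $L^{1}(G,A)$). Likewise (ii)$\Rightarrow$(iv) follows because $L^{1}(G,A)$ was shown in \S2 to be an essential left Banach $A$-module and a left Banach $G-A$-module.

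For (iii)$\Rightarrow$(i): suppose $A$ (as a left Banach $G-A$-module) has a $G-A$-fixed point $T_{0}\in P_{A}$, say $T_{a_{\si}}\to T_{0}$ weak$^{*}$ in $B(A^{**})$ with $a_{\si}\in S_{*}(M)$. Passing to a subnet we may assume $\widehat{a_{\si}}\to n_{0}$ weak$^{*}$ in $A^{**}$ for some state $n_{0}\in S(M)$. The key point --- replacing the identification $T_{0}\hat 1=n_{0}$ used in Theorem~\ref{th:fpt} --- is that $T_{0}$ restricted appropriately behaves like a right multiplier by $n_{0}$ on $A^{**}$, and since $A^{**}$ has a right identity $e$ which is a weak$^{*}$-cluster point of $\{\widehat{e_{\de}}\}$ (Proposition~\ref{prop:bna}), one computes $T_{0}e=\lim_{\si}\widehat{a_{\si}}e$. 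One then shows $T_{0}e=n_{0}$: writing $\widehat{a_{\si}}e=a_{\si}\cdot e$ and using that $e$ is a right identity together with weak$^{*}$-continuity of the relevant slots, $n_{0}(f)=n_{0}(ef)$ for $f\in A^{*}$ because $ef=f$ on $A$ (as $\widehat{a_{\si}}(ef)=(\widehat{a_{\si}}e)f\to n_{0}f$ and $ef$ agrees with $f$ when tested against elements of $A$, using $A^{2}=A$). With $T_{0}e=n_{0}$ in hand, apply the fixed-point relation: using Proposition~\ref{prop:alxta} and $\bt_{x}(e)=e$ (the bounded approximate identity is preserved, up to cluster points, by the inner automorphisms $\bt_{x}$, since $\bt_{x}$ is an isometric isomorphism and $x\to\bt_x$ is strongly continuous), we get $n_{0}=T_{0}e=T_{x}T_{0}T_{x^{-1}}e=T_{x}T_{0}\widehat{\bt_{x^{-1}}(e)}=T_{x}T_{0}e=\bt_{x}^{**}(n_{0})$ for all $x\in G$, so $n_{0}$ is a $G$-invariant state and $M$ is $G$-amenable.

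For (iv)$\Rightarrow$(i): this is essentially the argument already given for (iv)$\Rightarrow$(i) in Theorem~\ref{th:fpt}, now using the integration map $Q\colon L^{1}(G,A)\to A$, $Q(f)=\int f(t)\,d\la(t)$, which is linear, contractive, and satisfies $Q(af)=aQ(f)$. Take $f=g\otimes e_{\de}$ with $g\in P(G)$; starting from a weak$^{*}$-fixed point $T_{0}\in P_{L^{1}(G,A)}$ with $T_{a_{\si}}\to T_{0}$, the relation (\ref{eq:asigma}) gives $\bt_{x}(a_{\si})f-a_{\si}f\to 0$ weakly in $L^{1}(G,A)$, hence (Namioka's convexity argument) norm-to-zero after passing to convex combinations, and then applying $Q$ and passing to the limit in $\de$ yields $\norm{\bt_{x}(a_{\si})-a_{\si}}\to 0$ in $A$, so Proposition~\ref{prop:amen} (with $G$ discrete) gives $G$-amenability.

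The main obstacle I anticipate is the careful bookkeeping in (iii)$\Rightarrow$(i): specifically, identifying the weak$^{*}$-limit $T_{0}e$ with the state $n_{0}$ when $A$ has only a \emph{bounded} approximate identity rather than an identity, and verifying that the right identity $e\in A^{**}$ can be chosen $\bt_{x}$-fixed for all $x\in G$ simultaneously (or, more precisely, that $\bt_{x}^{**}(e)$ again acts as a right identity so that the computation $T_{x}T_{0}\widehat{\bt_{x^{-1}}(e)}=T_{x}T_{0}e$ goes through). This requires combining Proposition~\ref{prop:bna} with the strict/strong continuity of the inner action and the fact that each $\bt_{x}$ is an isometric algebra isomorphism; once that is pinned down, the rest is routine adaptation of Theorem~\ref{th:fpt}.
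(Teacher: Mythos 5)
Your overall plan (the cycle (i)$\Rightarrow$(ii)$\Rightarrow$(iii)$\Rightarrow$(i), (ii)$\Rightarrow$(iv)$\Rightarrow$(i), with (i)$\Rightarrow$(ii) quoted from Theorem~\ref{th:fpt} and the trivial implications handled by essentiality of $A$ and $L^{1}(G,A)$) is the paper's, but the two implications carrying the content have genuine gaps as you wrote them. In (iii)$\Rightarrow$(i) you need $T_{x}T_{0}T_{x^{-1}}e=T_{x}T_{0}e$, justified by ``$\bt_{x}(e)=e$''. This is not available. First, for the module structure that makes $A$ an essential left Banach $G-A$-module --- the one your own (ii)$\Rightarrow$(iii) invokes via (\ref{eq:btxax}), namely $xa=\ga(x)a$ --- the operator $T_{x^{-1}}$ on $A^{**}$ is the second adjoint of left multiplication by the multiplier $\ga(x^{-1})$, not $\bt_{x^{-1}}^{**}$; so $T_{x^{-1}}e=\ga(x^{-1})e$, and for $A=L^{1}(G)$ this is $\de_{x^{-1}}*e$, which is neither $e$ nor even a right identity (the Arens product gives $\hat{f}\cdot(\de_{x^{-1}}*e)=\widehat{f*\de_{x^{-1}}}\neq\hat{f}$). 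Second, even with the other compatible structure $xa=\bt_{x}(a)$, where $T_{x^{-1}}=\bt_{x^{-1}}^{**}$, the fact that $\{\bt_{x}(e_{\de})\}$ is again a bounded approximate identity only shows that $\bt_{x^{-1}}^{**}(e)$ is \emph{some} right identity of $A^{**}$, not that it equals the chosen cluster point $e$: right identities for the Arens product are not unique. (Your parenthetical remark is the germ of a repair in that case, since $\bt_{x^{-1}}^{**}$ is multiplicative, so $\bt_{x^{-1}}^{**}(e)$ is a right identity and $T_{0}e'=\lim\widehat{a_{\si}}e'=n_{0}$ for \emph{every} right identity $e'$; but this rescue is tied to the $\bt$-structure and contradicts the structure used in your (ii)$\Rightarrow$(iii).) The paper sidesteps the issue entirely: it never splits the sandwich, but applies Proposition~\ref{prop:alxta} first, so that $(T_{x}T_{a_{\si}}T_{x^{-1}}-T_{a_{\si}})e=\widehat{\bt_{x}(a_{\si})}-\widehat{a_{\si}}$, an identity requiring only that $e$ be a right identity; this tends weak$^{*}$ to $(T_{x}T_{0}T_{x^{-1}}-T_{0})e=0$ on one hand and to $\bt_{x}^{**}(n_{0})-n_{0}$ on the other, so $n_{0}$ is $G$-invariant with no invariance of $e$ needed and for either module structure.

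In (iv)$\Rightarrow$(i), your step ``applying $Q$ and passing to the limit in $\de$ yields $\norm{\bt_{x}(a_{\si})-a_{\si}}\to 0$'' interchanges two limits illegitimately. What you actually obtain is $\norm{(\bt_{x}(a_{\si})-a_{\si})e_{\de}}\to 0$ in $\si$ for each \emph{fixed} $\de$, and since $b\mapsto be_{\de}\to b$ is not uniform on bounded sets this does not control $\norm{\bt_{x}(a_{\si})-a_{\si}}$: in $L^{1}(\T)$ the characters $b_{n}$ satisfy $b_{n}*K_{\de}\to 0$ for each fixed Fej\'{e}r kernel $K_{\de}$ while $\norm{b_{n}}_{1}=1$. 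The paper avoids any norm statement here: it fixes $e'$ as a weak$^{*}$-limit of $\widehat{g\otimes e_{\de}}$ in $L^{1}(G,A)^{**}$, notes $Q^{**}(e')=e$, applies the fixed-point relation to the single element $e'$ to get $(xa_{\si}x^{-1}-a_{\si})e'\to 0$ weak$^{*}$, and pushes this through the $A$-module map $Q^{**}$ to obtain $(xa_{\si}x^{-1}-a_{\si})e\to 0$ weak$^{*}$ in $A^{**}$, after which the (iii)$\Rightarrow$(i) argument finishes. You should rework both steps along these lines; the remaining implications in your proposal do match the paper.
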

\begin{proof}
That (i) implies (ii) follows from Theorem~\ref{th:fpt}.  The implications (ii) implies (iii) and (ii) implies (iv) follow trivially, since both $A, L^{1}(G,A)$ are essential left Banach $A$-modules.  Next, we prove that (iii) implies (i).  By Proposition~\ref{prop:bna}, there exists $e\in A^{**}$ such that $me=m$ for all $m\in A^{**}$.  We can suppose that $\hat{a_{\si}}\to n_{0}$ weak$^{*}$ in $A^{**}$.  Then for $x\in G$, 
\[ x\hat{a_{\si}}x^{-1} - \hat{a_{\si}} = xa_{\si}x^{-1}e-a_{\si}e=(xT_{a_{\si}}x^{-1} - T_{a_{\si}})e\to 
xT_{0}x^{-1}e - T_{0}e=0  \]
weak$^{*}$ in $A^{**}$.  But  $(x\hat{a_{\si}}x^{-1} - \hat{a_{\si}}) \to (xn_{0}x^{-1} - n_{0})$ weak$^{*}$ in $A^{**}$.
So $n_{0}$ is a $G$-invariant state on $M$, and so $M$ is $G$-amenable.

We now prove that (iv) implies (i).  So assume that $X=L^{1}(G,A)$ has a $G-A$-fixed point.  Then there exists $T_{0}\in P_{X}$ such that (\ref{eq:tg0}) holds for all $x$.  As in the preceding paragraph, there exists a sequence $\{a_{\si}\}$ in $S_{*}(M)$ such that for all $x\in G, p\in X^{**}$, 
\begin{equation}  \label{eq:asip}
(xa_{\si}x^{-1} - a_{\si})p\to 0
\end{equation}
weak$^{*}$ in $X^{**}$.  Let $g\in P(G)$.  We can suppose that the bounded approximate identity $\{\widehat{e_{\de}}\}$ converges to $e$ weak$^{*}$ in $X^{**}$.  We can also suppose that 
$\widehat{g\otimes e_{\de}}\to e'$ weak$^{*}$ in $X^{**}$.  Then for each $\de$, $Q(g\otimes e_{\de})=e_{\de}$, and by the weak$^{*}$ continuity of $Q^{**}$, we get $Q^{**}(e')=e$.  Applying $Q^{**}$ to (\ref{eq:asip}) with $e'$ in place of $e$, and noting that $Q^{**}$ is a left $A$-module map, it follows that $xa_{\si}x^{-1}e-a_{\si}e \to 0$ weak$^{*}$ in $A^{**}$.  
(i) now follows as in the preceding paragraph.
\end{proof}

In a number of cases, the Banach algebra $A$ of Theorem~\ref{th:fpt} does not have a bounded approximate identity.  We saw a very simple example of this earlier for the case where $A^{2}=\{0\}$.  A more interesting example is that of the commutative Banach algebra $\ell^{1}(G)$ under pointwise product and with conjugation as the $G$-action.  This will be discussed in Example 1 of \S 5.   An example of special importance arises in Bekka's discussion of amenable representations (\cite{Bekka}).  Let $\pi$ be a unitary representation of $G$ on a Hilbert space $\mfH_{\pi}=\mfH$ and 
$M=B(\mfH)$.  We take $A$ to be the algebra $TC(\mfH)$ of trace class operators on $\mfH$ with the natural conjugation action: $\bt_{x}(T)=\pi(x)T\pi(x^{-1})$.   Then $A$ satisfies (a) and (b), and the action is obviously inner.  However, the ``inner action'' result Theorem~\ref{th:inner} does not apply in this case, though of course the more general result Theorem~\ref{th:fpt} does apply.  (See Example 4 of \S 5.)  The reason for this is that, for $\mfH$ an infinite dimensional Hilbert space, the Banach algebra $A=TC(\mfH)$ does not have a bounded approximate identity.  We have been unable to find this result in the literature, and now give two short proofs of it.  

The first proof, in the ``Arens product'' spirit above, goes as follows.  Suppose that $A$ does have a bounded approximate identity.  Since (\cite[p.52]{Pedersen})
$K(\mfH)^{*}=A, A^{*}=B(\mfH)$, we can write $A^{**}=A+Q$ where $Q=K(\mfH)^{\perp}$.  Then for 
$T\in A^{*}, a\in A$, we have $Ta\in K(\mfH)$, and it follows that $nT=0$ for all $n\in Q$.  Hence $pn=0$ for all $p\in A^{**}, n\in Q$.  Let $e$ be a right identity of $A^{**}$ (as in the proof of Theorem~\ref{th:inner}).  Writing $e=a_{1} + n_{1}$, where $a_{1}\in A, n_{1}\in Q$, we see that $a_{1}$ is a right identity for $A$, which entails $a_{1}=I$ and a contradiction.  

For the second proof, if $A$ has a bounded approximate identity then by Cohen's factorization theorem, $A^{2}=A$.  Now $A^{2}$ is contained in the Schatten-von Neumann ideal $C_{1/2}(\mfH)$ 
(since (\cite[XI.9.9]{DS2}) $C_{p_{1}}C_{p_{2}}\subset C_{p}$ whenever 
$1/p=1/p_{1}+1/p_{2}$, $p_{1}, p_{2}>0$).  This is a contradiction since $A=C_{1}(\mfH)$ is not a subset of $C_{1/2}(\mfH)$.  (Take, for example, a positive operator $T\in K(\mfH)$ with eigenvalues $\mu_{n}$ for which $\{\mu_{n}\}\in \ell^{1}\setminus \ell^{1/2}$, e.g. $\mu_{n}=\frac{1}{n^{2}}$.)  

\vspace*{-.2in}
\section{Examples}
We now illustrate the theory of the preceding section with some examples and comments.  As discussed in the preceding section, a technical difficulty arises if $A$ does not have a bounded approximate identity.  This was resolved by using $A^{1}$ instead of $A$. If we stay with $X=A$ and neither require the action $\bt$ of $G$ on $A$ to be inner nor that there exist a bounded approximate identity in $A$, then we no longer have available the element $e$ of the proof of Theorem~\ref{th:inner}.  So the argument for (iii) implies (i) in that theorem fails.  This was illustrated by the easy example in which $A^{2}=\{0\}$ and $M$ is not $G$-amenable.  We now look at a more interesting example of this phenomenon; in this case, $M=\ell^{\infty}(G)$ is $G$-amenable.

\vspace*{.2in}
\noindent
{\bf Example 1}\\
Let $G$ be an infinite discrete group, $M=\ell^{\infty}(G)$, and $A=\ell^{1}(G)$ with {\em pointwise} multiplication.  If $G$ is not finite, then $A$ does not have a bounded approximate identity.  (If it did, then because $A^{2}=\ell^{1/2}(G)\neq A$, we would contradict Cohen's theorem.)  The action $x\to \al_{x}$ of $G$ on $M$ is given by conjugation, 
$\al_{x}\phi=x\phi x^{-1}$, and for $f\in \ell^{1}(G)=M_{*}$, 
$\bt_{x}f(t)=f(x^{-1}tx)$.  Next (a) and (b) of \S 4 are satisfied, and $A$ is a left Banach $G-A$-module, with left $G$-action given by: $xf=\bt_{x}(f)$.  
For $f\in A$, let $f_{b}\in M=\ell^{\infty}(G)$ be $f$ regarded as a bounded function on $G$.  The map $f\to f_{b}$ is norm continuous from $A$ into $A^{*}=M$.  Let 
$\ell^{1}(G)_{b}$ be the range of the map $f\to f_{b}$.

For $\phi\in \ell^{\infty}(G), f,g\in \ell^{1}(G)$, we have $\phi
f(g)=\sum_{x\in G}\phi(x)f(x)g(x)$, so that $\phi f\in \ell^{\infty}(G)$ is just
pointwise multiplication $\phi f_{b}$ of $\phi$ and $f_{b}$, and so 
equals $(\phi f)_{b}\in \ell^{1}(G)_{b}$.  For $x\in G$, define $\chi_{x}\in \ell^{1}(G)_{b}$ by: 
$\chi_{x}(y)=1$ if $y=x$ and is $0$ otherwise.  Then $\phi f=\sum \phi(x)f(x)\chi_{x}\in \ell^{\infty}(G)$.
If $m\in \ell^{1}(G)^{**}$, let
$\tilde{m}\in \ell^{\infty}(G)$ be given by: $\tilde{m}(x)=m(\chi_{x})$.  Note that given distinct elements $x_{1}, .., x_{n}$ in $G$, we can choose modulus 1
scalars $z_{i}$ such that
$\norm{m}\geq \mid m(\sum z_{i}\chi_{x_{i}})\mid=\sum \mid
\tilde{m}(x_{i})\mid$ so that $\tilde{m}\in \ell^{1}(G)_{b}$. 
Then $m\phi(f)=m(\phi f_{b})=\sum \phi(x)f(x)m(\chi_{x})$ giving
$m\phi=\tilde{m}\phi$ (pointwise product) which is in
$\ell^{1}(G)_{b}$.

Assume now in addition that the only inner invariant mean on $\ell^{\infty}(G)$ is the trivial one $\de_{e}$.  Let $n\in S(M)\subset A^{**}$ where
$n(\ell^{1}(G)_{b})=\{0\}$.  Then $T_{n}\in P_{A}$, where $T_{n}(m)=nm$, and
$T_{n}m(\phi)=n(m^{~}\phi)=0$, so that $T_{n}=0$ and trivially
$T_{x}T_{n}T_{x^{-1}}=T_{n}$ for all $x\in G$.  But since $\de_{e}$ is the only inner invariant mean on $\ell^{\infty}(G)$ and $n(\chi_{e})=0$, we do not have
$\bt_{x}^{**}n=n$ for all $x\in G$.  So the argument for (iii) $\Rightarrow$ (i) of Theorem~\ref{th:fpt}, with $A$ in place of $A^{1}$, fails.
However $M$ is $G$-amenable since $\de_{e}$ is a $G$-invariant state, and so $A$ has the $G$-fixed point property.

\vspace*{.2in}
\noindent
{\bf Example 2}\\
Suppose that $A$ satisfies (a) and (b) and is unital.  Then since $\bt_{x}(1)=1$ for all $x\in G$, we get that 
$\hat{1}$ is a non-zero invariant linear functional on $M$.  It follows from the comments after 
Definition~\ref{def:Gamenable} that $M$ is $G$-amenable.

\vspace*{.2in}
\noindent
{\bf Example 3}\\
Let $M=L^{\infty}(G)$ with $\al_{x}\phi(t)=x\phi x^{-1}$ and $A$ the Banach algebra $L^{1}(G)$ under convolution.  So, as earlier, $\bt_{x}f=x*f*x^{-1}$ and the action is inner on $A$.  Then  $A$ satisfies the hypotheses of 
Theorem~\ref{th:inner}.  A $G$-invariant state $p$ is just an inner invariant mean on $M$.  In the notation of \cite{LauP}, inner amenability of $G$ just means that there exists an inner invariant mean on $L^{\infty}(G)$.  
Further, $S_{*}(M)=P(G)$, and every left Banach $G$-module is, in a natural way, a left Banach $G-A$-module.  It follows using Theorem~\ref{th:fpt} and Theorem~\ref{th:inner} that {\em $G$ is inner amenable if and only if, whenever $X$ is a left Banach $G$-module, there exists $T_{0}\in P_{X}$ such that $T_{x}T_{0}T_{x^{-1}}=T_{0}$ for all $x\in G$.}   This is just \cite[Theorem 5.1]{LauP}.

\vspace*{.2in}
\noindent
{\bf Example 4}\\
Let $\pi$ be a unitary representation of $G$ on a Hilbert space $\mfH_{\pi}$, and let $M=B(\mfH_{\pi})$.  Define
$\al_{x}(T)=\pi(x)T\pi(x^{-1})$.  Then 
$M_{*}=A=TC(\mfH_{\pi})$, the duality between $M$ and $A$ being given by: $\left<T,a\right>=Tr(Ta)$.  Then 
$\bt_{x}(a)=\pi(x)a\pi(x)^{-1}$ and, of course, $A$ is a Banach algebra for which (a) and (b) are satisfied.  Bekka, in his theory of amenable representations, defines the representation $\pi$ to be {\em amenable} if (in our terminology) $B(\mfH_{\pi})$ is $G$-amenable.  He asked (\cite[p.400]{Bekka}) if there is an analogue of the fixed point property for amenable representations.  Using Theorem~\ref{th:fpt}, we can give an answer to this question: {\em $\pi$ is amenable if and only if $TC(\mfH_{\pi})$ has the $G$-fixed point property.} 

\vspace*{.2in}
\noindent
{\bf Example 5}\\
In this example we take $M=VN(G)\subset B(L^{2}(G))$ and $A=A(G)=M_{*}$, the Fourier algebra of $G$.  (This example will be generalized in the next section.)  Let $\al$ be the conjugation action of $G$ on $VN(G)$: so 
$\al_{x}(m)=\pi_{2}(x)m\pi_{2}(x^{-1})$ for all $m\in M$, where $\pi_{2}$ is the left regular representation of $G$ on $L^{2}(G)$.   It is easy to check that for $f\in A(G)\subset C(G)$, 
$\bt_{x}(f)(y)=f(x^{-1}yx)$, and that (a) and (b) are satisfied.   So $A(G)$ is $G$-amenable if and only if it has the $G$-fixed point property.   Since the action of $G$ on $M$ is inner, these properties are also equivalent 
(cf. the proof of Proposition~\ref{prop:hyperamen}) to the presence of a net $\{e_{\de}\}$ in    
$S_{*}(M)=P(A(G))=\{f\in A(G): f\mbox{ positive definite}, f(e)=1\}$ with the property that $\norm{xe_{\de}-e_{\de}x}\to 0$ for all $x\in G$, where, for $f\in A(G)$,  $xf(y)=f(yx), fx(y)=f(xy)$.  

It is of interest to determine for which groups $G$, $A(G)^{*}=VN(G)$ is $G$-amenable.  Here are some partial results in this direction.  First, if $G$ is amenable, then $VN(G)$ is $G$-amenable.  Indeed, in that case, there is an invariant state on $B(L^{2}(G))$ and hence by restriction on $VN(G)$.  However, there are many examples of non-amenable groups $G$ for which $VN(G)$ is $G$-amenable.  To illustrate this, if $G$ is an [IN]-group (for example, discrete) then $VN(G)$ has a natural tracial state and so is $G$-amenable.  However, $VN(G)$ is not always $G$-amenable.  For example, let $G$ be a non-amenable, separable, connected locally compact group.  Then (\cite[Corollary 6.9, (c)]{Connes}) $VN(G)$ is injective.  So there exists  a norm one projection $P:B(L^{2}(G))\to VN(G)$.  Note that (\cite[p.131]{Takv1}) $P$ is unital, positive 
and $P(vTw)=vP(T)w$ for all $T\in B(L^{2}(G))$ and all $v,w\in VN(G)$.  If $VN(G)$ is $G$-amenable, then there exists an invariant state $m$ on $VN(G)$, and it follows that $m\circ P$ is an invariant state on $B(L^{2}(G))$.  
So $G$ is amenable (\cite[Theorem 2.2]{Bekka}) - just
restrict the invariant state to the image of $L^{\infty}(G)$ in $B(L^{2}(G))$ - and we obtain a contradiction.  So $VN(G)$ is not $G$-amenable in that case.  

\section{The fixed-point property for preduals of Hopf-von Neumann algebras}

In Example 5 of the preceding section, the von Neumann algebra $M$ was a {\em Kac algebra}.  This raises the natural question of the fixed point property for Kac algebras and indeed, more generally, for Hopf-von Neumann algebras.  We will investigate this question in the present section.  The group involved will be the {\em intrinsic group} of $M$, and $A=M_{*}$ has a natural Banach algebra structure satisfying (a) and (b) of \S 4, and, as we shall see, Theorem~\ref{th:fpt} applies.  In the Kac algebra case, this group is locally compact but this is not usually true in the Hopf-von Neumann case.  

Let $M$ be a Hopf-von Neumann algebra (\cite{Takesaki,Enock}) on a Hilbert space $\mfH$.  So (\cite[p.13]{Enock}) $M$ is a von Neumann algebra for which there is given an injective, normal morphism, called the {\em comultiplication}, 
$\Ga:M\to M\otimes M$ (the spatial tensor product of $M$ with itself) such that 
$(\Ga\otimes \imath)\circ \Ga=(\imath\otimes \Ga)\circ \Ga$ where $\imath:M\to M$ is the identity map.  Further, it is assumed that $\Ga(1)=1\otimes 1$.  Let $G$ be (\cite[p.14]{Enock}) the {\em intrinsic group} of $M$.  By definition, $G$ is the set of invertible elements $x$ of $M$ such that $\Ga(x)=x\otimes x$.  Then $G$ is a subgroup of the unitary group of $M$.  Further, the weak, ultraweak, strong and ultrastrong topologies coincide on $G$ and with this topology, $G$ is a topological group.  (If $M$ is a Kac algebra, then $G$ is (\cite[p.109]{Enock}) identified with the set of characters on $A$.)   We use the natural action of $G$ on $M$: $\al_{x}(m)=xmx^{-1}$.  Note that this action is inner on $M$.  Let $A=M_{*}$.

\begin{proposition}    \label{prop:aandb}
$A$ satisfies conditions (a) and (b) of \S 4.
\end{proposition}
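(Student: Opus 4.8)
The plan is to equip $A = M_*$ with the Banach algebra structure induced dually by the comultiplication $\Ga$, and then verify (a) and (b) directly. For (a), recall that $\Ga:M\to M\otimes M$ is a normal morphism, so it has a preadjoint $\Ga_*:(M\otimes M)_*\to M_*$. Since the predual of the spatial tensor product $M\otimes M$ is canonically the projective tensor product $M_*\widehat{\otimes}M_*$ (equipped with the operator-space projective tensor norm; but for the Banach-algebra statement the Banach projective tensor product suffices, as $\norm{a\otimes b}=\norm a\norm b$), we obtain a bounded bilinear map $A\times A\to A$ by $a\cdot b := \Ga_*(a\otimes b)$, i.e. $\langle a\cdot b, m\rangle = \langle a\otimes b, \Ga(m)\rangle$. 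First I would check that this product is associative: this is exactly the dual/preadjoint statement of the coassociativity identity $(\Ga\otimes\imath)\circ\Ga=(\imath\otimes\Ga)\circ\Ga$, since for $a,b,c\in A$ and $m\in M$,
\[
\langle (a\cdot b)\cdot c,\,m\rangle = \langle a\otimes b\otimes c,\,(\Ga\otimes\imath)\Ga(m)\rangle
= \langle a\otimes b\otimes c,\,(\imath\otimes\Ga)\Ga(m)\rangle = \langle a\cdot(b\cdot c),\,m\rangle .
\]
Submultiplicativity of the norm, $\norm{a\cdot b}\le\norm a\norm b$, follows because $\norm{\Ga_*}\le 1$ (the preadjoint of a unital, hence contractive, normal morphism is contractive) and $\norm{a\otimes b}_{M_*\widehat\otimes M_*}=\norm a\norm b$. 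That gives (a).

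For (b), I would use that each $\al_x(m)=xmx^{-1}$ is a $*$-automorphism of $M$ (as $x$ is unitary in $M$), hence weak$^*$-continuous, so it has a preadjoint $(\al_x)_* = \bt_x$ on $A$, which is an isometric Banach-space isomorphism. It remains to see $\bt_x$ is an \emph{algebra} isomorphism for the product just defined, i.e. $\bt_x(a\cdot b)=\bt_x(a)\cdot\bt_x(b)$. Dually, this amounts to the identity
\[
\Ga\circ\al_x = (\al_x\otimes\al_x)\circ\Ga \qquad (x\in G),
\]
which I would verify from the definition of $G$: since $\Ga(x)=x\otimes x$ and $\Ga$ is a morphism, for $m\in M$ we have $\Ga(xmx^{-1})=\Ga(x)\Ga(m)\Ga(x)^{-1}=(x\otimes x)\Ga(m)(x^{-1}\otimes x^{-1})=(\al_x\otimes\al_x)(\Ga(m))$. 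Taking preadjoints of both sides of $\Ga\circ\al_x=(\al_x\otimes\al_x)\circ\Ga$ and pairing with $a\otimes b$ gives $\bt_x(a\cdot b)=\bt_x(a)\cdot\bt_x(b)$, and since $\bt_x$ is already a bijective isometry, it is a Banach-algebra isomorphism of $A$. This establishes (b), and the proposition follows.

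The routine points I would still need to nail down — none of them serious obstacles — are: the identification of $(M\otimes M)_*$ with $M_*\widehat\otimes M_*$ isometrically (standard for the spatial tensor product, e.g. via Takesaki), so that the bilinear map $a\otimes b\mapsto\Ga_*(a\otimes b)$ really extends to all of $(M\otimes M)_*$ and defines a genuine product on $A$; and the compatibility of $\bt_x=(\al_x)_*$ here with the notation $\bt_x=(\al_{x^{-1}})_*$ used in \S 4 (a harmless indexing convention, since $x\mapsto x^{-1}$ is an anti-automorphism of $G$ and one only needs $\{\bt_x\}$ to be an action). The only place where one must be a little careful is whether associativity/multiplicativity require the \emph{operator-space} projective tensor norm rather than the Banach one: for the purely algebraic identities (associativity, $\bt_x$ multiplicative) the algebraic tensor product $M_*\odot M_*$ suffices and density does the rest, so the subtlety does not actually arise. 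The anticipated ``hard part'' is thus essentially bookkeeping: making sure the preadjoint of $\Ga$ lands in $A$ and is contractive, which is immediate from normality and unitality of $\Ga$.
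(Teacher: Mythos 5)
Your proposal is correct and is essentially the paper's own argument: for (a) the paper simply cites Enock--Schwartz for the fact that $\langle ab,m\rangle=\langle a\otimes b,\Ga(m)\rangle$ defines a Banach algebra product on $A=M_{*}$ (which is the construction you sketch), and for (b) it performs exactly your computation $\Ga(xmx^{-1})=(x\otimes x)\Ga(m)(x^{-1}\otimes x^{-1})$, written out by pairing with $a\otimes b$, to get $\bt_{x}(ab)=\bt_{x}(a)\bt_{x}(b)$. The one caveat --- that the Banach projective tensor product $M_{*}\widehat{\otimes}M_{*}$ is not in general the predual of the spatial tensor product --- you already defuse yourself, since the definition of the product only requires the normal functional $a\otimes b$ on elementary tensors together with normality and contractivity of $\Ga$.
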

\begin{proof}  
It is well known (\cite[p.13]{Enock}) that a Banach algebra multiplication on $A$ is determined by: $m(ab)=\Ga(m)(a\otimes b)$.  So (a) of \S 4 is satisfied.  Now let $m\in M, a,b\in A$ and $x\in G$.  Then $\left<m,x(ab)x^{-1}\right>=\left<x^{-1}mx,ab\right>=
\left<\Ga(x^{-1}mx),a\otimes b\right>=\left<\Ga(x)^{-1}\Ga(m)\Ga(x),a\otimes b\right>=\\
\left<\Ga(m),(x\otimes x)(a\otimes b)(x^{-1}\otimes x^{-1})\right>=\left<\Ga(m),(xax^{-1})\otimes (xbx^{-1})\right>\\
=\left<m,(xax^{-1})(xbx^{-1})\right>$, so that $\bt_{x}(ab)=\bt_{x}(a)\bt_{x}(b)$. (b) now follows.
\end{proof} 

Two canonical examples of Hopf-von Neumann algebras - indeed, they are even Kac algebras - are $L^{\infty}(G)$ and $VN(G)$.  In the case of $L^{\infty}(G)$, realized as multiplication operators on $L^{2}(G)$, the map $\Ga:L^{\infty}(G)\to L^{\infty}(G)\otimes L^{\infty}(G)=L^{\infty}(G\x G)$ is given by:
$\Ga(m)(s,t)=m(st)$ (\cite[p.10, p.55f.]{Enock}).  Of course, in that case, $A=L^{1}(G)$ under the convolution product.  The other example (\cite[p.95, p.114]{Enock}) is $VN(G)$ with $\Ga:VN(G)\to VN(G)\otimes VN(G)$ determined by: $\Ga(s)=s\otimes s$, where $s\in G$ (identified with $\pi_{2}(s)\in VN(G)$).

In the first case, $M=L^{\infty}(G)$, the intrinsic group of $M$, denoted here by $G_{i}$ (to avoid confusion), is the topological group of characters on $G$.  So $G_{i}=\widehat{G}$ and for $\ga\in \widehat{G}$, $\ga m\ga^{-1}=m$ for all $m\in L^{\infty}(G)$.   Any state on $L^{\infty}(G)$ is then $G_{i}$-invariant, so that $L^{\infty}(G)$ is trivially $\widehat{G}$-amenable.   In the second case, $M$ is the Kac algebra $VN(G)$ so that $A=A(G)$.  The intrinsic group of $VN(G)$ is just $G\cong \pi_{2}(G)$ (\cite[p.136]{Enock}), and so 
$\al_{x}(m)=xmx^{-1}$.  This case is discussed in Example 5 of the preceding section. 

Now let $M$ be again a general Hopf-von Neumann algebra.  We now discuss the $G$-fixed point property for $A$.  Since $G$ is not necessarily a locally compact group, we cannot use formulations of the fixed point property involving $L^{1}(G)$.  Instead, we use Theorem~\ref{th:fpt} in this case with $G$ taken to be {\em discrete}.  Note that the action $\bt$ of $G$ on $A$ is not usually inner so that Theorem~\ref{th:inner} does not apply.   Indeed, for $\bt$ to be inner, we would require in particular that for each $x\in G$, $a,b\in A$, the multiplier condition
$x(ab)=(xa)b$ holds.  However it is readily checked that instead, $x(ab)=(xa)(xb)$.

\section{F\o lner conditions for $G$-amenable von Neumann algebras}

$G$-amenability for von Neumann algebras is interestingly connected with {\em traces}, as will be clear from the study of the F\o lner conditions in this section.  For the present, we illustrate the connection with an easy result (cf. \cite[Proposition 1]{Valette}).

\begin{proposition}           \label{prop:piamen}
Let $\pi$ be a unitary representation of a locally compact group $G$ on a Hilbert space $\mfH$ and suppose that $\pi(G)$ is contained in a von Neumann subalgebra $N$ of $B(\mfH)$ such that $N$ is amenable and admits a tracial state.  Then $\pi$ is amenable (i.e. $B(\mfH)$ is $G$-amenable).
\end{proposition}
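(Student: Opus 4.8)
The plan is to produce a $G$-invariant state on $B(\mfH)$ by composing a trace-induced state on $N$ with a norm-one projection onto $N$. First I would use the amenability of $N$: since $N$ is an amenable von Neumann algebra (equivalently, injective), there exists a norm-one projection $E: B(\mfH) \to N$. As recalled in Example 5 of \S5, such a projection is automatically unital, positive, and satisfies the bimodule property $E(vTw) = vE(T)w$ for all $T \in B(\mfH)$ and $v, w \in N$. Next I would take the tracial state $\tau$ on $N$ and set $p = \tau \circ E$, a state on $B(\mfH)$.

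It remains to check that $p$ is $G$-invariant for the action $\al_x(T) = \pi(x)T\pi(x^{-1})$. Fix $x \in G$ and $T \in B(\mfH)$. Since $\pi(x), \pi(x^{-1}) \in N$, the bimodule property gives $E(\pi(x)T\pi(x^{-1})) = \pi(x)E(T)\pi(x^{-1})$. Applying $\tau$ and using that $\tau$ is a trace on $N$ (so $\tau(uSu^{-1}) = \tau(S)$ for $S \in N$ and $u$ unitary in $N$), we get $p(\al_x(T)) = \tau(\pi(x)E(T)\pi(x^{-1})) = \tau(E(T)) = p(T)$. Thus $(\al_{x^{-1}})^{*}(p) = p$ for all $x \in G$, so $B(\mfH)$ is $G$-amenable, i.e. $\pi$ is amenable.

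The only genuine subtlety — and the step I would flag as the main point to get right rather than an obstacle — is the justification that amenability of $N$ supplies a conditional expectation $E: B(\mfH) \to N$ with the stated module property; this is standard (injectivity of $N$, together with the Tomiyama-type automatic properties of norm-one projections onto von Neumann subalgebras, cited as in \cite[p.131]{Takv1}), so the argument is short once this is invoked. Everything else is a direct computation using tracality and the bimodule identity.
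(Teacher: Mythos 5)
Your argument is correct and is essentially the paper's own proof: the paper simply says ``as in Example 5 of \S 5,'' where the same construction appears — a norm-one projection $E:B(\mfH)\to N$ from injectivity (with the Tomiyama bimodule property, cited there as \cite[p.131]{Takv1}) composed with an invariant state on $N$, here the tracial state, whose traciality gives invariance under conjugation by $\pi(G)\subset N$. No gaps; your identification of amenable with injective and the computation $\tau(E(\pi(x)T\pi(x^{-1})))=\tau(\pi(x)E(T)\pi(x^{-1}))=\tau(E(T))$ is exactly what is intended.
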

\begin{proof}
As in Example 5, \S 5, there exists a $G$-invariant state on $B(\mfH)$.   So $\pi$ is amenable.
\end{proof}

The F\o lner condition for a locally compact group $G$ is as follows (\cite[Theorem 4.10]{Paterson}): {\em given $\eps>0$ and  $C\in \mathcal{C}(G)$, there exists a nonnull, compact subset $K$ of $G$ such that $\la(xK\bigtriangleup K)<\eps\la(K)$
for all $x\in C$.}  Connes (\cite[Theorem 5.1]{Connes}) and Bekka (\cite[Theorem 6.2]{Bekka}) established what can be regarded as operator generalizations of the F\o lner condition.  In their context, the characteristic function $\chi_{K}$ of the above F\o lner condition is replaced by a projection $P$, and the finiteness of $\la(K)$ is replaced by the finite rank property for $P$.  Bekka says that his result was largely inspired by that of Connes, and our results are effectively extensions of theirs and proved in a similar way.  We start by discussing the F\o lner conditions of Connes and Bekka.

Connes's F\o lner condition is used in the proof of his famous theorem on the uniqueness of the hyperfinite $II_{1}$ factor.
In that theorem, it was shown that seven properties for a $II_{1}$ factor are equivalent, the really hard part being to go from 7. to 1..  What we need is the equivalence of 7. and 6., and this can be conveniently formulated as follows.  We will refer to this theorem as Theorem A.

(Theorem A) Let $M$ be a $II_{1}$ factor acting on $\mfH=L^{2}(M,\tau)$, where $\tau$ is the unique normal tracial state on $M$. Let $U(M)$ be the unitary group of $M$ regarded as discrete, and $\norm{.}_{HS}, \left<.\right>_{HS}$ be respectively the norm and inner product associated with the space of Hilbert-Schmidt operators on $\mfH$.  Then 1) and 2) are equivalent:
\bi
\item[1)] $B(\mfH)$ is $U(M)$-amenable (or equivalently, there exists a $U(M)$-hyper\-trace on $B(\mfH)$).
\item[2)] Given $u_{1},\ldots, u_{n}\in U(M)$ and $\eps >0$, there exists a non-zero, finite rank projection $P\in B(\mfH)$ such that for all $j$, $1\leq j\leq n$, 
\bi
\item[(a)] $\norm{u_{j}Pu_{j}^{-1}-P}_{HS}<\eps\norm{P}_{HS}$;
\item[(b)] $\mid\tau(u_{j}) - \left<u_{j}P,P\right>_{HS}/\left<P,P\right>_{HS}\mid<\eps$.
\ei
\ei
Note that in 1), $G=U(M)$ acts on $M$ by conjugation and \S 3, Note applies. 

Bekka's theorem, referred to as Theorem B, is as follows. \\
(Theorem B) Let $\pi$ be a unitary representation of a locally compact group $G$ on a Hilbert space $\mfH$.  Then the following are equivalent:
\bi
\item[1)] $B(\mfH)$ is $G$-amenable (under the conjugation action of \S 5, Example 4).
\item[2)] Given $\eps>0$ and $C\in \mathcal{C}(G)$, there exists a finite rank projection $P\in B(\mfH)$ such that 
$\norm{\pi(x)P\pi(x)^{-1} - P}_{1}<\eps\norm{P}_{1}$  for all $x\in C$ (where $\norm{.}_{1}$ is the norm on the space $TC(\mfH)$ of trace class operators on $\mfH$).  
\ei

Comparing these, Theorem A is set in the context of $II_{1}$-factor M with $G$ the unitary group of $M$ treated as discrete, while in Theorem B, $G$ is an arbitary locally compact group with a representation on an arbitrary Hilbert space $\mfH$.  Both are $G$-amenability theorems.  The condition 2)(a) of Theorem A differs from 2) of Theorem B in that one involves the Hilbert-Schmidt norm while the other involves the trace class norm, but it is easy to go from one norm to the other.  There is no version of Theorem A, 2)(b) in Theorem B.  We will show that the two theorems can be unified in terms of $G$-amenability for a semifinite von Neumann algebra.  

We first summarize some results following from the spectral theorem.  Let $\mfH$ be a Hilbert space and $x\in B(\mfH)$.  Write $x=u(x)\left|x\right|$, the polar decomposition of $x$: so (\cite[p.51]{Murphy}) $\left|x\right|=(x^{*}x)^{1/2}$ and $u(x)$ is the partial isometry that sends $\left|x\right|\xi\to x\xi$ for $\xi\in \mfH$ and vanishes on 
$\ov{\left|x\right|(\mfH)}^{\perp}$.  Also, $u(x)^{*}x=\left|x\right|$.  Let $h\geq 0$ in $B(\mfH)$.  Then by the spectral theorem (\cite[p.72]{Murphy}) there exists a spectral measure $E(h)$ on the spectrum $\si(h)$ of $h$ such that $h=\int t\,d\,E(h)$.  For every bounded Borel measurable function $f$ on $\R$, there is defined an operator $f(h)=\int f(x)\,d\,E(h)(x)$ on $\mfH$ that belongs to the von Neumann algebra generated by $h$.  For $a\geq 0$, let $E_{a}(h)=E(h)((a,\infty))$ which is, of course, a projection in $M$.  

Now let $M$ be a semifinite von Neumann algebra, i.e. a direct sum of type I, type $II_{1}$ and type $II_{\infty}$ von Neumann algebras.  Then (\cite[p.317]{Takv1}) $M$ admits a faithful, semifinite, normal trace $\tau$.   A projection $e\in M$ will be said to be {\em of finite, non-zero trace} if $0<\tau(e)<\infty$.  Note that every projection $e$ of non-zero, finite trace is a finite projection in $M$.  Let $\mfn_{\tau}$ be the set of elements 
$x\in M$ such that $\tau(x^{*}x)<\infty$.  Then (\cite[p.100]{D1}, \cite[p.322]{Takv1}) $\mfn_{\tau}$ is a full Hilbert algebra with scalar product given by: $\left<x,y\right>=\tau(y^{*}x)$.  The completion of this pre-Hilbert space is the Hilbert space
$L^{2}(M,\tau)$ on which $M$ acts by extending the left multiplication (\cite[Theorem 2.22]{Takv1}).  This identifies $M$ with a von Neumann subalgebra of $B(L^{2}(M,\tau))$.  Let $\mfm_{\tau}$ be the set of finite sums of elements of the form $xy$ where $x,y\in \mfn_{\tau}$.  This is an ideal in $M$ and $\tau$ extends from $\mfm_{\tau}\cap M_{+}$ to a linear functional, also denoted $\tau$, on $\mfm_{\tau}$.   Further, $\mfm_{\tau}$ is a normed space under the norm $\norm{.}_{1}$ where $\norm{x}_{1}=\tau(\left|x\right|)$.  The completion of this space is a Banach space 
$L^{1}(M,\tau)$ which (\cite[Theorem 2.18]{Takv1}) is the predual of $M$ under the duality map 
$(y,x)\in M\x \mfm_{\tau}\to \tau(yx)$.  Of course, when $M=B(\mfH)$, then $L^{2}(M,\tau), L^{1}(M,\tau)$ are respectively the spaces of Hilbert-Schmidt and trace-class operators.  The elements of $L^{1}(M,\tau), L^{2}(M,\tau)$ can be realized in terms of (usually unbounded) operators on $L^{2}(M,\tau)$ (\cite{Segal,D1,Nelson,Leinert}) and this theory is used in \cite{Connes}.  However, as in \cite{Bekka}, we are able to avoid the use of unbounded operators in the present context.  

For $a\in \mfm_{\tau}\subset L^{1}(M,\tau)$, let $T_{a}\in B(L^{2}(M,\tau))$ be the operator associated with $a$ by left multiplication.  Of course, as an operator on $L^{2}(M,\tau)$, $T_{a}$ is identified with $a\in M$.  

\begin{proposition}  \label{prop:positive}
\bi
\item[(i)] An element $a\in \mfm_{\tau}$ is a positive linear functional on $M$ if and only if $T_{a}\geq 0$.
\item[(ii)] $S_{*}(M)\cap \mfm_{\tau}$ is norm dense in $S_{*}(M)$.
\ei
\end{proposition}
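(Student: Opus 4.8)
The plan is to treat the two parts separately, reducing everything to three standard facts about the trace $\tau$ (see \cite{Takv1,D1}): the trace identity $\tau(xw)=\tau(wx)$ for $x\in\mfm_{\tau}$, $w\in M$; the fact that $\mfm_{\tau}$ is a self-adjoint ideal of $M$ which contains $|a|$ and the parts $a_{\pm}$ whenever $a\in\mfm_{\tau}$ is self-adjoint; and the duality estimate $|\tau(yc)|\leq\norm{y}\,\norm{c}_{1}$ for $y\in M$, $c\in L^{1}(M,\tau)=M_{*}$. None of this requires unbounded operators.

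For (i), one direction is a short computation. If $T_{a}\geq 0$, i.e.\ $a\geq 0$ in $M$, then for $y\geq 0$ in $M$ we have $y^{1/2}a\in\mfm_{\tau}$ and, by the trace identity, $\tau(ya)=\tau\bigl(y^{1/2}(y^{1/2}a)\bigr)=\tau\bigl(y^{1/2}ay^{1/2}\bigr)\geq 0$, since $y^{1/2}ay^{1/2}\in\mfm_{\tau}\cap M_{+}$ and $\tau$ is positive there; hence the functional $\om_{a}\colon y\mapsto\tau(ya)$ is positive. Conversely, suppose $\om_{a}$ is positive, hence self-adjoint. Writing out self-adjointness and applying the trace identity yields $\tau(ya)=\tau(ya^{*})$ for all $y\in M$, and since $b\mapsto\om_{b}$ is injective (being the identification $L^{1}(M,\tau)=M_{*}$) we conclude $a=a^{*}$. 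Let $e\in M$ be the spectral projection of $a$ for $(-\infty,0)$, so that $ae=ea=-a_{-}$ with $a_{-}\in\mfm_{\tau}\cap M_{+}$. Since $e\geq 0$ in $M$ and $\om_{a}$ is positive, $0\leq\om_{a}(e)=\tau(ae)=-\tau(a_{-})$, so $\tau(a_{-})=0$; by faithfulness of $\tau$, $a_{-}=0$, i.e.\ $a\geq 0$ and $T_{a}\geq 0$.

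For (ii), recall that $\mfm_{\tau}$ is $\norm{\cdot}_{1}$-dense in $L^{1}(M,\tau)=M_{*}$. Given $\psi\in S_{*}(M)$ and $\eps>0$, pick $b\in\mfm_{\tau}$ with $\norm{b-\psi}_{1}<\eps$; replacing $b$ by $\tfrac{1}{2}(b+b^{*})$ only decreases the error (the involution is an isometry of $L^{1}$ and $\psi=\psi^{*}$), so we may take $b=b^{*}$, and then $b_{\pm}\in\mfm_{\tau}\cap M_{+}$. Let $e\in M$ be the spectral projection of $b$ for $(-\infty,0)$, so $be=-b_{-}$; using $\psi(e)\geq 0$ and the duality estimate,
\[
\norm{b_{-}}_{1}=\tau(b_{-})=-\tau(be)=-\psi(e)-\tau\bigl((b-\psi)e\bigr)\leq\bigl|\tau\bigl((b-\psi)e\bigr)\bigr|\leq\norm{b-\psi}_{1}<\eps .
\]
Hence $\norm{b_{+}-\psi}_{1}<2\eps$, and by part (i) $b_{+}$ is a positive normal functional. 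Finally $|\tau(b_{+})-1|\leq\norm{b_{+}-\psi}_{1}<2\eps$, so for $\eps<\tfrac12$ the element $a:=b_{+}/\tau(b_{+})$ lies in $S_{*}(M)\cap\mfm_{\tau}$ and satisfies $\norm{a-\psi}_{1}\leq|1-\tau(b_{+})|+\norm{b_{+}-\psi}_{1}<4\eps$. Letting $\eps\to 0$ gives the asserted density.

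The only point requiring any real care is the implication $\om_{a}\geq 0\Rightarrow a\geq 0$ in (i); it is handled cleanly by testing $\om_{a}$ against the negative spectral projection of $a$, which keeps the whole argument inside the bounded operator theory, as in \cite{Bekka}. Everything else is a routine density-and-correction computation, so I anticipate no serious obstacle.
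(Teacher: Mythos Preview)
Your proof is correct, but both parts take a different route from the paper's.

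For (i), the paper works directly on $L^{2}(M,\tau)$: it computes $\langle T_{a}x,x\rangle=\tau(x^{*}ax)=\tau(axx^{*})=a(xx^{*})$ for $x\in\mfn_{\tau}$, so positivity of the functional $a$ gives $T_{a}\geq 0$ by density of $\mfn_{\tau}$ in $L^{2}$, and the converse is obtained by reversing this together with the ultraweak density of $\mfm_{\tau}^{+}$ in $M^{+}$. You instead stay inside $M$: for one direction you rewrite $\tau(ya)=\tau(y^{1/2}ay^{1/2})$, and for the harder direction you test $\om_{a}$ against the negative spectral projection of $a$ and invoke faithfulness of $\tau$. Your argument avoids any appeal to density in $L^{2}$ or ultraweak approximation, at the cost of one spectral-calculus step. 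For (ii), the paper decomposes a normal state as $\sum_{i}\om_{\xi_{i}}$ with $\xi_{i}\in L^{2}(M,\tau)$, truncates, and then pushes the $\xi_{i}$ into $\mfn_{\tau}$ so that each $\om_{\xi_{i}}$ becomes $\xi_{i}\xi_{i}^{*}\in\mfm_{\tau}$. You instead use the $\norm{\cdot}_{1}$-density of $\mfm_{\tau}$ in $M_{*}$ and then correct to a positive normalized element by the spectral-projection estimate for $b_{-}$. The paper's route is more Hilbert-space-theoretic and produces an explicit approximant of the form $\sum\xi_{i}\xi_{i}^{*}$; yours is purely $L^{1}$-based and a bit more self-contained, needing no vector-state representation.
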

\begin{proof} (i) Suppose that $a\in \mfm_{\tau}$ is a positive linear functional in $M_{*}$.  Then for $x\in \mfn_{\tau}$, $0\leq a(xx^{*})=\tau(axx^{*})=\tau(x^{*}ax)=\left<T_{a}x,x\right>$, and the density of $\mfn_{\tau}$ in 
$L^{2}(M,\tau)$ gives that $T_{a}\geq 0$.  The converse follows by reversing the preceding argument, and using the ultraweak density of $\mfm_{\tau}^{+}$ in $M^{+}$ and the ultraweak continuity of $a$. \\
(ii) Let $b\in S_{*}(M)$.  Since $b$ is an ultraweakly continuous state, there exists a sequence $\{\xi_{i}\}$ in $L^{2}(M,\tau)$ such that $b=\sum_{i=1}^{\infty}\om_{\xi_{i}}$, where $\sum_{i=1}^{\infty} \norm{\xi_{i}}^{2}=1$ and 
$\om_{\xi_{i}}(m)=\left<m\xi_{i},\xi_{i}\right>$ for all $m\in M$.  Since $\sum_{i=N}^{\infty} \norm{\xi_{i}}^{2}\to 0$ as $N\to \infty$, we can approximate $b$ arbitrarily closely in norm by a state that is a finite sum $r$ of $\om_{\xi_{i}}$'s.  Since $\mfn_{\tau}$ is dense in $L^{2}(M,\tau)$, we can assume that the $\xi_{i}$'s belong to $\mfn_{\tau}$.   But then
$\om_{\xi_{i}}$ is identified with $\xi_{i}\xi_{i}^{*}\in \mfm_{\tau}\subset L^{1}(M,\tau)$, and $r\in S_{*}(M)\cap \mfm_{\tau}$.
\end{proof}  
 
Next suppose that the action $\al$ of $G$ on $M$ is {\em invariant} in the sense that 
for all $m\in M^{+}$, we have $\tau(\al_{x}(m))=\tau(m)$.  Here are two important examples of invariant actions.  If $\al$ is the action on $M$ coming from a representation $\pi$ of $G$ in $M$, then trivially, $\al$ is invariant.  Also, if $M=L^{\infty}(G)$, $\al_{x}\phi=\phi x^{-1}$ (\S 3, (c)) and $\tau$ is a left Haar measure on $G$, then $\al$ is invariant. 

Let $\al$ be an invariant action on $M$.  Then 
$\al_{x}(\mfm_{\tau})=\mfm_{\tau}$ and $\al_{x}(\mfn_{\tau})=\mfn_{\tau}$.  Now let $m'\in \mfm_{\tau}$.  Then we can regard 
$m'$ either as an element of $M_{*}$ or as an element of $M$.  The invariance of $\tau$ gives us the following equality that relates the $G$-actions on $M_{*}$ and $M$: 
\begin{equation}         \label{eq:starx}
(\al_{x^{-1}})_{*}(m')=\al_{x}(m').
\end{equation}
To prove this, for $m\in M$, $\left<(\al_{x^{-1}})_{*}(m'),m\right>=\left<m',\al_{x^{-1}}(m)\right>
=\tau(\al_{x^{-1}}(m^{*})m')=\tau(\al_{x^{-1}}(m^{*}\al_{x}(m')))=\tau(m^{*}\al_{x}(m'))=\left<\al_{x}(m'),m\right>$.
The next proposition is a slight improvement of part of Proposition~\ref{prop:amen}. 

\begin{proposition}        \label{prop:amen2}
Let $x\to \al_{x}$ be an action of $G$ on the semifinite von Neumann algebra $M$.  Then 
$M$ is $G$-amenable if and only if, for each $\eta>0$ and $C\in \mathcal{C}(G)$, there exists  $m'\in \mfm_{\tau}$ such that
$m'\geq 0, \tau(m)=1$ and 
\begin{equation}   \label{eq:21reiter}
\norm{\al_{x}(m') - m'}_{1}<\eta
\end{equation}
for all $x\in C$.
\end{proposition}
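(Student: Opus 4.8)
The plan is to deduce this from the equivalence of (i) and (iii) in Proposition~\ref{prop:amen} (the Reiter-type characterization of $G$-amenability), upgrading the approximating normal state to one lying in $\mfm_{\tau}$ by means of Proposition~\ref{prop:positive}(ii), and then rewriting the resulting inequality using the identity (\ref{eq:starx}) together with the fact that, on $\mfm_{\tau}\subset L^{1}(M,\tau)=M_{*}$, the predual norm of $M_{*}$ coincides with the trace norm $\norm{\cdot}_{1}$. Throughout we use that the action $\al$ is invariant, so that each $\al_{x}$ preserves $\mfm_{\tau}$ and (\ref{eq:starx}) applies.

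For the ``only if'' direction, assume $M$ is $G$-amenable and fix $\eta>0$ and $C\in\mathcal{C}(G)$. By Proposition~\ref{prop:amen}, applied with $\eps=\eta/3$, there is $a\in S_{*}(M)$ with $\norm{x.a-a}<\eta/3$ for all $x\in C$, and by Proposition~\ref{prop:positive}(ii) there is $m'\in S_{*}(M)\cap\mfm_{\tau}$ with $\norm{a-m'}<\eta/3$. Since each map $b\mapsto x.b$ is contractive on $M_{*}$, the triangle inequality gives $\norm{x.m'-m'}<\eta$ for all $x\in C$. Now $m'$ is a positive element of $\mfm_{\tau}$ --- equivalently, by Proposition~\ref{prop:positive}(i), $T_{m'}\geq 0$ --- and $\tau(m')=m'(1)=1$ because $m'$ is a state. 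As $\al_{x}(m')-m'\in\mfm_{\tau}$ and, by (\ref{eq:starx}), $x.m'=(\al_{x^{-1}})_{*}(m')=\al_{x}(m')$, and since the $M_{*}$-norm agrees with $\norm{\cdot}_{1}$ on $\mfm_{\tau}$, we obtain $\norm{\al_{x}(m')-m'}_{1}=\norm{x.m'-m'}<\eta$ for all $x\in C$, which is (\ref{eq:21reiter}).

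Conversely, suppose that for every $\eta>0$ and $C\in\mathcal{C}(G)$ such an $m'$ exists. Then $m'$ is a normal state: it lies in $\mfm_{\tau}\subset M_{*}$, is positive, and has $m'(1)=\tau(m')=1$, so $m'\in S_{*}(M)$. Reversing the identifications of the previous paragraph gives $\norm{x.m'-m'}=\norm{\al_{x}(m')-m'}_{1}<\eta$ for all $x\in C$, which is precisely condition (iii) of Proposition~\ref{prop:amen}; hence $M$ is $G$-amenable. No substantive obstacle is expected: the only points needing care are the two bookkeeping facts already recorded above --- that $\norm{\cdot}_{1}$ is the predual norm on $\mfm_{\tau}$, and that (\ref{eq:starx}) converts $x.m'$ into $\al_{x}(m')$ --- together with the routine observation that the data ``$m'\in\mfm_{\tau}$, $m'\geq 0$, $\tau(m')=1$'' is the same as ``$m'$ a normal state lying in $\mfm_{\tau}$''.
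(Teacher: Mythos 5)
Your proposal is correct and follows essentially the same route as the paper's own proof: invoke Proposition~\ref{prop:amen}(iii), use the density statement in Proposition~\ref{prop:positive}(ii) to move the approximating state into $\mfm_{\tau}$, and translate via (\ref{eq:starx}) and the identification of the predual norm with $\norm{\cdot}_{1}$. The only difference is that you spell out the $\eta/3$ triangle-inequality step and the (immediate) converse, which the paper leaves implicit.
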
    
\begin{proof}
By (ii) of Proposition~\ref{prop:positive}, we can take the $a$ of (\ref{eq:reiter1}) to be an element $m'$ in $\mfm_{\tau}$.  Applying (\ref{eq:starx}) gives (\ref{eq:21reiter}).  Then $m'\in S_{*}(M)$ and so is a state on $M$.  Hence $m'\geq 0$ and $1=m'(1)=\tau(m')$.
\end{proof}

We now prove our F\o lner condition for $M$ - the proof is along the same lines as Bekka's Theorem B. 

\begin{theorem}   \label{th:folner}
Let $M$ be a semifinite von Neumann algebra with faithful, semifinite, normal trace $\tau$, and $\al$ be an invariant action of $G$ on $M$.  Then $M$ is $G$-amenable if and only if it satisfies the following F\o lner condition:  given $C\in \mathcal{C}(G)$ and $\eps>0$, there exists a (finite) projection $P$ in $M$ of finite, non-zero trace such that for all $x\in C$,
\begin{equation}
\norm{\al_{x}(P) - P}_{1}<\eps\tau(P).  \label{eq:folner}
\end{equation}
\end{theorem}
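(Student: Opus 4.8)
The plan is to follow the classical F\o lner-argument route, adapting Bekka's proof of Theorem B to the semifinite setting. One direction is routine: if $M$ satisfies the F\o lner condition \eqref{eq:folner}, then setting $m' = P/\tau(P)$ gives an element of $\mfm_{\tau}$ with $m'\geq 0$, $\tau(m')=1$, and $\norm{\al_{x}(m') - m'}_{1} < \eps$ for all $x\in C$, so $M$ is $G$-amenable by Proposition~\ref{prop:amen2}. The substance is the forward direction, and I would proceed as follows.

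First I would invoke Proposition~\ref{prop:amen2} to extract, given $C\in\mathcal{C}(G)$ and $\eps>0$, an element $m'\in \mfm_{\tau}$ with $m'\geq 0$, $\tau(m')=1$, and $\norm{\al_{x}(m') - m'}_{1} < \eps'$ for all $x\in C$, where $\eps'$ is a small parameter to be chosen later. Then I would use the spectral-theorem apparatus summarized before Proposition~\ref{prop:positive}: write $h = m'\geq 0$ and consider the spectral projections $E_{a}(h) = E(h)((a,\infty))$ for $a>0$. The key identity is the ``layer-cake'' formula $h = \int_{0}^{\infty} E_{a}(h)\,da$, which holds in $\mfm_{\tau}$ (and after applying $\tau$, in the sense of Fubini) so that $\tau(h) = \int_{0}^{\infty}\tau(E_{a}(h))\,da$. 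The crucial estimate, which is the analogue of the combinatorial F\o lner lemma comparing $\chi_{xK}$ with $\chi_{K}$ via level sets, is that for a monotone Borel function one has a pointwise (operator-theoretic) bound of the form
\[
\int_{0}^{\infty}\norm{E_{a}(\al_{x}(h)) - E_{a}(h)}_{1}\,da \leq \norm{\al_{x}(h) - h}_{1},
\]
using that $\al_{x}$ commutes with the Borel functional calculus, i.e. $E_{a}(\al_{x}(h)) = \al_{x}(E_{a}(h))$, together with the fact that for positive operators $s,t$ the trace-norm distance between spectral projections integrates up to the trace-norm distance between the operators. This inequality lets me pass from the F\o lner-type estimate for $m'$ to an averaged F\o lner-type estimate for the family of projections $\{E_{a}(h)\}_{a>0}$.

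Next, combining the two displayed facts, I would estimate
\[
\int_{0}^{\infty}\Big(\sum_{x\in C_{0}}\norm{\al_{x}(E_{a}(h)) - E_{a}(h)}_{1}\Big)\,da
\]
for a suitable finite subset $C_{0}$ (after reducing to finitely many $x$'s, as in the passage from Reiter to F\o lner in \cite[Theorem 4.4, Theorem 4.10]{Paterson}), and compare it with $\eps'\cdot(\text{card}\,C_0)\cdot\tau(h) = \eps'\cdot(\text{card}\,C_0)\int_{0}^{\infty}\tau(E_{a}(h))\,da$. An averaging (pigeonhole) argument then produces some $a_{0}>0$ with $0 < \tau(E_{a_{0}}(h)) < \infty$ and $\sum_{x\in C_{0}}\norm{\al_{x}(E_{a_{0}}(h)) - E_{a_{0}}(h)}_{1} < \eps\,\tau(E_{a_{0}}(h))$; finiteness of the trace of $E_{a_0}(h)$ is automatic since $h\in\mfm_\tau$ and $a_0 E_{a_0}(h)\le h$, and non-zero-ness follows because $\tau(h)=1>0$ forces $\tau(E_{a}(h))>0$ for small $a$. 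Setting $P = E_{a_{0}}(h)$ gives the required projection. One final technical point: the reduction from a general compact set $C$ to a finite subset must be handled by covering $C$ with finitely many translates of a small neighbourhood and using the norm continuity of $x\mapsto \al_x(m')$, exactly as in the classical case.

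I expect the main obstacle to be establishing the layer-cake inequality relating $\int_0^\infty \norm{E_a(\al_x(h)) - E_a(h)}_1\,da$ to $\norm{\al_x(h) - h}_1$ rigorously in the semifinite (rather than finite-dimensional) setting, where one must be careful that the spectral projections $E_a(h)$ lie in $\mfm_\tau$, that the trace-norm is well-behaved under the integral, and that the comparison of level sets of two non-commuting positive operators genuinely yields the trace-norm distance as an upper bound. In the $B(\mfH)$ case this is handled by a direct computation with eigenvalue sequences (essentially the classical rearrangement/F\o lner lemma); here it should follow from the spectral scale / generalized singular-value machinery for $\tau$-measurable operators, or alternatively can be reduced to the $B(\mfH)$ case by an approximation through finite-trace reductions $M_e = eMe$. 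Everything else is bookkeeping of the kind already present in the proofs of Proposition~\ref{prop:amen} and Bekka's Theorem B.
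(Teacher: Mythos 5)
Your easy direction is fine, and the overall skeleton (Reiter density $m'$, spectral projections, pigeonhole over the level parameter $a$) is the right shape, but the step you yourself flag as the main obstacle is a genuine gap, and it cannot be repaired in the form you propose. The inequality $\int_{0}^{\infty}\norm{E_{a}(\al_{x}(h))-E_{a}(h)}_{1}\,da\leq \norm{\al_{x}(h)-h}_{1}$ is simply false for non-commuting positive operators: from the layer-cake representation $s-t=\int_{0}^{\infty}(E_{a}(s)-E_{a}(t))\,da$ one only gets the \emph{reverse} inequality $\norm{s-t}_{1}\leq\int_{0}^{\infty}\norm{E_{a}(s)-E_{a}(t)}_{1}\,da$, and already $2\times 2$ examples give strict failure of your direction (take $s$ a rank-one projection $P$ and $t=\la Q$ with $\la>1$ and $Q$ a rank-one projection at angle $\theta$ to $P$: the left side is $2\sin\theta+(\la-1)$ while $\norm{s-t}_{1}=\sqrt{(\la-1)^{2}+4\la\sin^{2}\theta}$, which is smaller). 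Nor does generalized singular-value machinery or reduction to $B(\mfH)$ rescue it, since the statement is already false there; the trace-norm behaviour of spectral projections of two non-commuting operators is exactly the difficulty that forces Connes and Bekka (and the paper) onto a different route. The paper's proof takes the square root $m=(m')^{1/2}\in\mfn_{\tau}$, uses the Powers--St\o rmer inequality to convert the $\norm{\cdot}_{1}$-Reiter estimate for $m'$ into a $\norm{\cdot}_{2}$-estimate for $m$, then applies the Hilbert--Schmidt-norm spectral estimate from Connes's Lemma 1.2.6, namely $\int_{0}^{\infty}\norm{E_{a}(h^{2})-E_{a}(k^{2})}_{2}^{2}\,da\leq\norm{h-k}_{2}\norm{h+k}_{2}$ together with $\int_{0}^{\infty}\norm{E_{a^{1/2}}(m)}_{2}^{2}\,da=\norm{m^{2}}_{1}=1$, pigeonholes over $a$, and only at the end converts back to $\norm{\cdot}_{1}$ for the chosen projection $Q=E_{a^{1/2}}(m)$ via Powers--St\o rmer again, using $\norm{\al_{x}(Q)-Q}_{1}=\norm{\al_{x}(Q)^{2}-Q^{2}}_{1}\leq 2\norm{\al_{x}(Q)-Q}_{2}\tau(Q)^{1/2}$. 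This detour through the $2$-norm is not bookkeeping; it is the mathematical content that replaces your false $1$-norm layer-cake lemma.

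A secondary problem is your treatment of compactness of $C$. Reducing to a finite subset $C_{0}$ and invoking norm continuity of $x\mapsto\al_{x}(m')$ does not suffice: the final estimate concerns $\al_{x}(P)$ for a projection $P$ that is only produced \emph{after} the pigeonhole, and the modulus of continuity of $x\mapsto\al_{x}(P)$ relative to $\tau(P)$ is not uniform in the unknown $P$, so an $\eps$-net argument does not close. The paper instead integrates the squared $2$-norm deviation over $x\in C$ against Haar measure inside the pigeonhole, obtaining the estimate for all $x$ in an open subset $N$ of $D=C\cup C^{2}$ with $\la(D\setminus N)$ small, and then gets all of $C$ from $C\subset NN^{-1}$ together with invariance of $\tau$ (the device of \cite[Theorem 4.10]{Paterson}). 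So both the key lemma and the compactness step need to be replaced by the Powers--St\o rmer/Hilbert--Schmidt argument and the $NN^{-1}$ trick, respectively.
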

\begin{proof}
(\ref{eq:starx}) and
(\ref{eq:folner}) imply the $G$-amenability of $M$ by Proposition~\ref{prop:amen}.  For the converse,  suppose that $M$ is $G$-amenable.  By the Powers-St\o rmer inequality (\cite[Proposition 1.2.1]{Connes}), for $h,k\geq 0$ in $\mfn_{\tau}$, we have 
\begin{equation}  \label{eq:ps}
\norm{h - k}_{2}^{2}\leq \norm{h^{2} - k^{2}}_{1}\leq \norm{h-k}_{2}(\norm{h}_{2} + \norm{k}_{2}).       
\end{equation}
Let $C\in \mathcal{C}(G), \la(C)>0$, $\eps,\de>0$ and $\eta=(\de\eps^{2}/[8\la(C)])^{2}$.  
From Proposition~\ref{prop:amen2}, there exists $m'\in \mfm_{\tau}$, where  $m'\geq 0, \tau(m')=1$, such that 
(\ref{eq:21reiter}) holds.  By Proposition~\ref{prop:positive}, (i), $m'$ is a positive element of $B(L^{2}(M,\tau))$.  Let
$m=(m')^{1/2}\in \mfn_{\tau}$.  Then $\norm{m}_{2}=\tau(m^{2})^{1/2}=1$, and from (\ref{eq:ps}), for all $x\in C$,
\begin{equation}  \label{eq:xmx-1}
 \norm{\al_{x}(m)-m}_{2}<\eta^{1/2}=\de\eps^{2}/[8\la(C)].  
\end{equation}      
Recall that $E(m)$ is the spectral measure of $m$.  

From the proof of  \cite[Lemma 1.2.6]{Connes}, for $h, k\in \mfn_{\tau}^{+}$ and $a>0$ in $\R$,
\begin{equation}    \label{eq:eahk}
\int_{(0,\infty)}\norm{E_{a}(h^{2}) - E_{a}(k^{2})}_{2}^{2}\,da=\norm{h^{2}-k^{2}}_{1}\leq
\norm{h-k}_{2}\norm{h+k}_{2}.
\end{equation}
Since $\chi_{(a^{1/2},\infty)}(t)=\chi_{(a,\infty)}(t^{2})$, the spectral theorem gives that 
\begin{equation}  \label{eq:21}
E_{a^{1/2}}(h)=E_{a}(h^{2}), \hspace{.2in}   E_{a^{1/2}}(k)=E_{a}(k^{2}).
\end{equation}

We now apply (\ref{eq:eahk}) with $h=\al_{x}(m)$ and $k=m$.   Noting that $\norm{h+k}_{2}\leq \norm{h}_{2}
+\norm{k}_{2}=2$, and using (\ref{eq:xmx-1}), (\ref{eq:eahk}) and (\ref{eq:21}), we obtain
\begin{equation}         \label{eq:deeps8}
\int_{(0,\infty)} \norm{E_{a^{1/2}}(\al_{x}(m)) - E_{a^{1/2}}(m)}_{2}^{2}\,da < \de\eps^{2}/[4\la(C)].
\end{equation}
From the proof of the spectral theorem, $E_{a^{1/2}}(\al_{x}(m))=\al_{x}(E_{a^{1/2}}(m))$.  
Also for $b>0$, 
\begin{equation}  \label{eq:finite}  
E_{b}(m)=\int_{b}^{\infty}\,dE_{\la}\leq b^{-2}\int_{b}^{\infty}\la^{2}\,dE_{\la}\leq b^{-2}m^{2}
\end{equation}
and so 
\begin{equation}  \label{eq:ebmfinite}
\tau(E_{b}(m))\leq b^{-2}\tau(m^{2})<\infty.   
\end{equation}
Using (\ref{eq:ps}) and the continuity of 
$x\to \bt_{x}(c)$ on $M_{*}=L^{1}(M,\tau)$ for fixed $c\in M_{*}$, it follows that the map
$x\to \al_{x}(E_{b}(m))$ is norm continuous from $G$ into $L^{2}(M,\tau)$.

From (\ref{eq:eahk}) and (\ref{eq:21}) (with $h=m,k=0$), we have
\begin{equation}   \label{eq:square}
\int_{(0,\infty)}\norm{E_{a^{1/2}}(m)}_{2}^{2}\,da=\norm{m^{2}}_{1}=1.
\end{equation}
From (\ref{eq:square}) and (\ref{eq:deeps8}), we get
\begin{multline}    \label{eq:geq}
\int_{(0,\infty)}[(\de\eps^{2}/4)\norm{E_{a^{1/2}}(m)}_{2}^{2}-
(\int_{C}\norm{\al_{x}(E_{a^{1/2}}(m)) - E_{a^{1/2}}(m)}_{2}^{2}\,d\la(x))]\,da \\
>\de\eps^{2}/4 -\la(C)[\de\eps^{2}/(4\la(C))]=0.
\end{multline}
So there exists $a>0$ such that 
\begin{equation}   \label{eq:deepsa}
(\de\eps^{2}/4)\norm{E_{a^{1/2}}(m)}_{2}^{2}>\int_{C}\norm{\al_{x}(E_{a^{1/2}}(m)) - E_{a^{1/2}}(m)}_{2}^{2}\,d\la(x).  
\end{equation} 
Let $N$ be the (relatively open) set of $x$'s in $C$ such that 
\begin{equation}   \label{eq:neps}
\norm{\al_{x}(E_{a^{1/2}}(m)) - E_{a^{1/2}}(m)}_{2}^{2}<(\eps^{2}/4)\norm{E_{a^{1/2}}(m)}_{2}^{2}.
\end{equation}
Then by (\ref{eq:deepsa}),
\begin{multline*}
(\eps^{2}/4)\la(C\setminus N)\norm{E_{a^{1/2}}(m)}_{2}^{2}\leq \int_{C}\norm{\al_{x}(E_{a^{1/2}}(m)) - E_{a^{1/2}}(m)}_{2}^{2}\,d\la(x) \\
< (\de\eps^{2}/4)\norm{E_{a^{1/2}}(m)}_{2}^{2},  
\end{multline*}
and it follows that $\la(C\setminus N)<\de$. Let $Q=E_{a^{1/2}}(m)$.  By (\ref{eq:ebmfinite}) and (\ref{eq:deepsa}), $0<\tau(Q)<\infty$.   Also, 
$\norm{Q}_{2}^{2}=\tau(Q^{*}Q)=\tau(Q)$.  Let $x\in N$.  Using (\ref{eq:ps}), (\ref{eq:neps}) and the invariance of $\al$,  for $x\in N$, 
\begin{multline}
\norm{\al_{x}(Q)-Q}_{1}=\norm{(\al_{x}(Q))^{2} - Q^{2}}_{1}\leq
\norm{\al_{x}(Q)-Q}_{2}[2(\tau(Q))^{1/2}]\\
<(\eps/2)(\tau(Q))^{1/2}[2(\tau(Q))^{1/2}]=\eps\tau(Q).  
\end{multline}

So we have proved that given $C\in \mathcal{C}(G)$ and $\eps,\de>0$, there exists a projection $Q\in M$ of finite, non-zero trace and an open subset $N$ of $C$ with $\la(C\setminus N)<\de$ such that
$\norm{\al_{x}(Q) - Q}_{1}<\eps\tau(Q)$ for all $x\in N$. We now want to use the argument of \cite[Theorem 4.10]{Paterson} to establish (\ref{eq:folner}).  To do this, as in that theorem, we take $C\in \mathcal{C}(G)$ ($\la(C)>0$) and $\eps>0$.  Let $D=C\cup C^{2}$ and $\de=\la(C)/2$.  Find a projection $P$ in $M$, $0<\tau(P)<\infty$ such that there is an open subset $N$ of $D$ for which $\la(D\setminus N)<\de$ and $\norm{\al_{x}(P) - P}_{1}<(\eps/2)\tau(P)$ for all $x\in N$.  Then if $x,y\in N$, using the invariance of $\al$, we get $\norm{\al_{xy^{-1}}(P) - P}_{1}\leq \norm{\al_{xy^{-1}}(P) - \al_{x}(P)}_{1}
+\norm{\al_{x}(P)-P}_{1}= \norm{\al_{y}(P)-P}_{1}+\norm{\al_{x}(P)-P}_{1}<\eps\tau(P)$.  It is proved in 
\cite[Theorem 4.10]{Paterson} that $C\subset NN^{-1}$ and (\ref{eq:folner}) is proved.
\end{proof}

We note that in the case $M=B(\mfH)$ with the standard trace $\tau$ and where the action on $M$ comes from a representation $\pi$ of $G$ in $M$, then the projection $P$ satisfies $\tau(P)<\infty$ if and only if it is finite-dimensional 
(\cite[Part 1, Chapter 6, Theorem 5]{D1}), and so the above theorem in that case reduces to Theorem B.  The classical F\o lner condition is obtained from Theorem~\ref{th:folner} from the case (c) of \S 3 where $M=L^{\infty}(G)$, $\al_{x}\phi=\phi x^{-1}$ and $\tau$ is a left Haar measure $\la$ on $G$.  In that case, $G$-amenability is the same as the amenability of $G$, and the theorem gives that this is equivalent to the existence of a measurable set $E$ in $G$ of finite positive measure such that 
$\la(xE\bigtriangleup E)/\la(E)<\eps$.  The classical F\o lner condition follows using the inner regularity of $\la$.   We now formulate a $\cs$ version of condition 2b) of Theorem A above. 

\begin{theorem}          \label{th:traceest}
Let $M$ be a semifinite von Neumann algebra with faithful, semifinite, normal trace $\tau$.  Let $G$ be a locally compact group, $\pi$ a unitary representation of $G$ in $M$ and $B\subset M$ be the $\cs$ generated by $\pi(L^{1}(G)\cup G)$. 
Then there exists a net of projections $\{e_{\de}\}$ in $M$ such that 
$0<\tau(e_{\de})<\infty$ for all $\de$ and a tracial state $\si$ on $B$ such that 
\begin{equation}  \label{eq:eden}
g_{\de}\to \si
\end{equation}
weak$^{*}$ in $B^{*}$, where $g_{\de}$ is the restriction of $p_{\de}$ to $B$ and
$p_{\de}\in S(M)$ given by: $p_{\de}(m)=\tau(me_{\de})/\tau(e_{\de})$.
\end{theorem}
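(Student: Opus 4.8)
The plan is to follow the pattern of Connes's trace approximation (Theorem~A, 2(b)) and Bekka's Theorem~B: take for the $e_\de$ the F\o lner projections supplied by Theorem~\ref{th:folner}, check that the associated states $p_\de$ are then ``approximately tracial'' on $B$, and pass to a weak$^{*}$-cluster point. As in those results the relevant situation is the one in which $M$ is $G$-amenable --- equivalently, $\pi$ is amenable --- and this is what makes Theorem~\ref{th:folner} applicable here; note that the action $m\mapsto\al_x(m)=\pi(x)m\pi(x)^{-1}$ is automatically $\tau$-invariant, being conjugation by a unitary.

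So, by Theorem~\ref{th:folner}, for each $C\in\mathcal{C}(G)$ and $\eps>0$ there is a projection $e_{C,\eps}\in M$ with $0<\tau(e_{C,\eps})<\infty$ and $\norm{\al_x(e_{C,\eps})-e_{C,\eps}}_{1}<\eps\,\tau(e_{C,\eps})$ for all $x\in C$. Let $\{e_\de\}$ be the net indexed by the pairs $\de=(C,\eps)$ in the obvious order, and set $p_\de(m)=\tau(me_\de)/\tau(e_\de)$; each $p_\de$ is a normal state on $M$ (positivity from $\tau(m^{*}me_\de)=\tau(e_\de m^{*}me_\de)\geq0$, normality from $e_\de\in\mfm_\tau\subset M_{*}$). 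Put $g_\de=p_\de|_{B}$. Since $S(B)$ is weak$^{*}$-compact we may, after passing to a subnet and relabelling, assume $g_\de\to\si$ weak$^{*}$ for some $\si\in S(B)$; it then remains only to prove that $\si$ is a trace.

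The engine is a single cyclic rearrangement of $\tau$ together with the identity $\pi(x)e_\de=\al_x(e_\de)\pi(x)$: for $b\in M$,
\[
p_\de(\pi(x)b)-p_\de(b\pi(x))=\frac{\tau\big(b\,(e_\de-\al_x(e_\de))\,\pi(x)\big)}{\tau(e_\de)},
\]
whence $\left|p_\de(\pi(x)b)-p_\de(b\pi(x))\right|\leq\norm{b}\,\norm{\al_x(e_\de)-e_\de}_{1}/\tau(e_\de)<\eps\norm{b}$ as soon as $x\in C$ and $\de=(C,\eps)$. Hence $\si(\pi(x)b-b\pi(x))=0$ for all $x\in G$, $b\in B$. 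Integrating this identity against $f\in C_{c}(G)$ --- legitimate because each $p_\de$ is normal, because $\pi(f)=\int f(x)\pi(x)\,d\la(x)$ is a weak$^{*}$ integral, and because right multiplication by $b$ is weak$^{*}$-continuous --- and then using $\norm{\pi(f)}\leq\norm{f}_{1}$ with the norm-density of $C_{c}(G)$ in $L^{1}(G)$, we get $\si(\pi(f)b-b\pi(f))=0$ for all $f\in L^{1}(G)$, $b\in B$. Finally, the set $D=\{c\in B:\si(cb)=\si(bc)\text{ for all }b\in B\}$ is a norm-closed, $*$-closed subalgebra of $B$ (the subalgebra property being the elementary identity $\si(c_{1}c_{2}b)=\si(c_{1}(c_{2}b))=\si((c_{2}b)c_{1})=\si(c_{2}(bc_{1}))=\si(bc_{1}c_{2})$, and the closure and $*$-properties holding because $\si$ is a state), and it contains the generators $\pi(L^{1}(G))\cup\pi(G)$; therefore $D=B$, that is, $\si$ is tracial. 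This gives (\ref{eq:eden}).

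I expect the real work to be concentrated in the middle step: first, recognizing that the very projections delivered by Theorem~\ref{th:folner} --- built, through the Powers-St\o rmer machinery, only to control $\norm{\al_x(e_\de)-e_\de}_{1}$ --- are already exactly what the trace approximation needs, so that the displayed estimate is all that must be checked; and second, the somewhat delicate interchange of the normal functional $p_\de$ with the vector-valued integral defining $\pi(f)$, which is what carries the conclusion from $\pi(G)$ over to all of $\pi(L^{1}(G))$ and hence (via the subalgebra argument) to $B$. The reduction to a weak$^{*}$-cluster point and the closure argument for $D$ are formal.
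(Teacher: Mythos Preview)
Your proof is correct and follows essentially the same route as the paper: take the F\o lner projections from Theorem~\ref{th:folner}, show the associated normal states are asymptotically central on the generators $\pi(G)\cup\pi(L^{1}(G))$, and pass to a weak$^{*}$ cluster point. The only cosmetic difference is that the paper integrates at the level of $L^{1}(M,\tau)$ first (obtaining $\norm{\pi(f)e_{\de}'-e_{\de}'\pi(f)}_{1}\to 0$ via the argument of Proposition~\ref{prop:hyperamen}) and then applies the state, whereas you apply the state $p_{\de}$ first and then integrate using its normality; and the paper concludes traciality by the one-line remark that $\pi(L^{1}(G)\cup G)$ spans a dense subalgebra, while you spell this out via the centralizer $D$. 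One small wording point: when you say ``integrating this identity'' you really mean integrating the estimate $|p_{\de}(\pi(x)b-b\pi(x))|<\eps\norm{b}$ (valid for $x\in C$) against $f\in C_{c}(G)$ with support in $C$, not the $\si$-identity itself---$\si$ is not normal, so one cannot pull it through the weak$^{*}$ integral directly. Your parenthetical about $p_{\de}$ being normal shows you have the right argument in mind; just make the sentence match it.
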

\begin{proof}
By (\ref{eq:folner}), there exists a net $\{e_{\de}\}$ of projections in $M$ of finite, non-zero trace such that 
$\norm{\pi(x)e_{\de}' - e_{\de}'\pi(x)}_{1}\to 0$ 
uniformly on compacta, where $e_{\de}'=e_{\de}/\tau(e_{\de})$.  As in the proof of Proposition~\ref{prop:hyperamen}, it follows that $\norm{\pi(f)e_{\de}' - e_{\de}'\pi(f)}_{1}\to 0$ for all $f\in C_{c}(G)$.  We can suppose that $g_{\de}\to \si$ weak$^{*}$ in $B^{*}$.  Then $\si\geq 0$, and since $\si(1)=1$, it follows that $\si$ is a state on $B$.  Also, since for each $m\in M$, $\left|p_{\de}(\pi(f)m - m\pi(f))\right|\leq \norm{\pi(f)e_{\de}' - e_{\de}'\pi(f)}_{1}\norm{m}$, we get 
that $\pi(f)\si=\si\pi(f)$ for all $f\in L^{1}(G)$.   Similarly, $\pi(x)\si =\si\pi(x)$ for all $x\in G$.  Since
$\pi(L^{1}(G)\cup G)$ spans a dense subalgebra of $B$, it follows that $\si$ is a tracial state.
\end{proof}
\begin{corollary}   \label{cor:traceest}
Let $M, G, \pi$ be as in the preceding corollary and let $A$ be the $\cs$ generated by $\pi(L^{1}(G))$.  Suppose that $A$ has a unique tracial state $\si_{0}$ (e.g. $A$ could be UHF).  Then there exists a net of projections $\{e_{\de}\}$ in $M$ of finite, non-zero trace and a constant $k\in [0,1]$ such that $\widehat{h_{\de}}\to k\si_{0}$ weak$^{*}$ in $A^{*}$, where $h_{\de}$ is the restriction of $p_{\de}$ to $A$.
\end{corollary}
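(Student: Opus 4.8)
The plan is to deduce the corollary directly from Theorem~\ref{th:traceest} by restricting from $B$ to the subalgebra $A$. First I would apply Theorem~\ref{th:traceest} to the data $M,G,\pi$, obtaining a net $\{e_{\de}\}$ of projections in $M$ with $0<\tau(e_{\de})<\infty$ and a tracial state $\si$ on $B$, the \cs\ generated by $\pi(L^{1}(G)\cup G)$, such that $g_{\de}\to \si$ weak$^{*}$ in $B^{*}$, where $g_{\de}$ is the restriction of $p_{\de}$ to $B$ and $p_{\de}(m)=\tau(me_{\de})/\tau(e_{\de})$. Since $\pi(L^{1}(G))\subseteq \pi(L^{1}(G)\cup G)$ we have $A\subseteq B$, so the \emph{same} net $\{e_{\de}\}$ will serve for the corollary.

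The key step is to identify the restriction $\si|_{A}$ of $\si$ to $A$. As $\si\geq 0$ on $B$ and $A\subseteq B$, the functional $\si|_{A}$ is positive on $A$; set $k=\norm{\si|_{A}}$, so that $k\in[0,1]$ because $\norm{\si}=1$. Since $\si$ is tracial on $B$, $\si(xy)=\si(yx)$ for all $x,y\in B$ and in particular for all $x,y\in A$, so $\si|_{A}$ is a tracial positive functional on $A$. If $k=0$ then $\si|_{A}=0=0\cdot \si_{0}$. If $k>0$, then $k^{-1}\si|_{A}$ is a positive functional of norm one, hence a state on $A$ (a state on a possibly non-unital \cs\ being just a positive functional of norm one), and it is tracial; by the hypothesis that $\si_{0}$ is the unique tracial state of $A$ we get $k^{-1}\si|_{A}=\si_{0}$, that is $\si|_{A}=k\si_{0}$. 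So in all cases $\si|_{A}=k\si_{0}$ for some $k\in[0,1]$.

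Finally the weak$^{*}$ convergence transfers at once: writing $h_{\de}$ for the restriction of $p_{\de}$ to $A$, we have $h_{\de}=g_{\de}|_{A}$, and hence for each $a\in A\subseteq B$, $h_{\de}(a)=g_{\de}(a)\to \si(a)=(k\si_{0})(a)$; thus $\widehat{h_{\de}}\to k\si_{0}$ weak$^{*}$ in $A^{*}$, as required. The one point that needs a little care is the identification of $\si|_{A}$ with a scalar multiple of $\si_{0}$: the algebra $A$ need not be unital (the unit of $M$ need not lie in the norm closure of $\pi(L^{1}(G))$), so $\si|_{A}$ may well have norm strictly less than one, and one uses the standard fact that a nonzero positive functional, renormalized by its norm, is a state even in the non-unital case before invoking uniqueness of the trace. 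Everything else is a routine restriction argument.
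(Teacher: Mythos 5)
Your argument is correct and is exactly the intended one: the paper states this corollary without proof, treating it as the immediate consequence of Theorem~\ref{th:traceest} obtained by restricting $\si$ to $A\subseteq B$, normalizing the (possibly non-unital, possibly norm $<1$) positive tracial functional $\si|_{A}$, and invoking uniqueness of $\si_{0}$ -- precisely what you do, including the correct handling of the case $k=0$.
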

\begin{corollary}            \label{cor:tauresult}
Let $M$ be as above and $N$ be a factor of type $II_{1}$ contained in $M$ and with (unique) normalized trace $\tau_{N}$.  Suppose that there exists a $U(N)$-hypertrace on $M$ for the factor $N$, i.e. a state $\phi\in M^{*}$ such that $\phi(mn-nm)=0$ for all $m\in M, n\in N$.  Then there exists a net $\{e_{\de}\}$ of projections in $M$ of finite, non-zero trace such that $h_{\de}\to \tau_{N}$ weak$^{*}$ in $N^{*}$, where $h_{\de}$ is the restriction of $p_{\de}$ to $N$.
\end{corollary}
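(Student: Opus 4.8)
The plan is to obtain this as a consequence of Theorem~\ref{th:traceest}, applied with $G=U(N)$ given the discrete topology and with $\pi: U(N)\hookrightarrow U(M)$ the inclusion, which is a unitary representation of $G$ in $M$ (weak$^{*}$-continuity being automatic for discrete $G$); the associated action of $G$ on $M$ is the conjugation action $\al_{u}(m)=umu^{-1}$.

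First I would check the ingredients that make the argument of Theorem~\ref{th:traceest} run. The action $\al$ is invariant for $\tau$ because $\tau$ is a trace, so $\tau(\al_{u}(m))=\tau(umu^{-1})=\tau(m)$ for $m\in M^{+}$, and $M$ is semifinite by hypothesis. Crucially, $M$ is $U(N)$-amenable: substituting $u^{-1}m$ for $m$ in the hypertrace identity $\phi(mu-um)=0$ ($u\in U(N)$) gives $\phi(u^{-1}mu)=\phi(m)$ for all $m\in M$ and $u\in U(N)$, that is, $(\al_{u^{-1}})^{*}(\phi)=\phi$ for every $u\in U(N)$; since $\phi$ is a state, $M$ is $U(N)$-amenable in the sense of Definition~\ref{def:Gamenable}. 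Hence the F\o lner condition (\ref{eq:folner}) of Theorem~\ref{th:folner} is available, so Theorem~\ref{th:traceest} applies.

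It then remains only to identify the $\cs$ $B$ generated by $\pi(L^{1}(G)\cup G)$. Clearly $B\subseteq N$, since $\pi(G)=U(N)\subset N$ and $N$ is norm closed. Conversely $N\subseteq B$: every self-adjoint $h\in N$ with $\norm{h}\leq 1$ can be written $h=\frac{1}{2}(u+u^{*})$ with $u=h+i(1-h^{2})^{1/2}\in U(N)$, so the linear span of $U(N)$ is all of $N$, and $B$, being a closed subspace containing $U(N)$, contains $N$; hence $B=N$. Theorem~\ref{th:traceest} now produces a net $\{e_{\de}\}$ of projections of finite, non-zero trace in $M$ together with a tracial state $\si$ on $B=N$ such that $g_{\de}\to\si$ weak$^{*}$ in $N^{*}$, where $g_{\de}$ is the restriction to $N$ of $p_{\de}$, $p_{\de}(m)=\tau(me_{\de})/\tau(e_{\de})$. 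Since $N$ is a factor of type $II_{1}$ its tracial state is unique, so $\si=\tau_{N}$, and $g_{\de}=h_{\de}$ in the notation of the statement; thus $h_{\de}\to\tau_{N}$ weak$^{*}$ in $N^{*}$.

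The computations here are all routine, and there is no serious obstacle: the result is essentially Theorem~\ref{th:traceest} read in the special case $B=N$. The one point that deserves a little care is the identification $B=N$, which reduces to the elementary fact that a von Neumann algebra is spanned, even algebraically, by its unitary group; combined with the uniqueness of the trace on a $II_{1}$ factor, this pins down $\si$.
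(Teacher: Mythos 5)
Your proof is correct and follows essentially the same route as the paper: take $G=U(N)$ discrete with $\pi$ the inclusion, note that the hypertrace is a $G$-invariant state (the content of the Note in \S 3, which you verify directly), identify $B=N$ since $U(N)$ spans $N$, and apply Theorem~\ref{th:traceest} together with uniqueness of the trace on the $II_{1}$ factor to conclude $\si=\tau_{N}$.
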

\begin{proof}
Let $G=U(N)$ with the discrete topology and $\pi$ be the inclusion map from $U(N)$ into $U(M)$.  Then 
$B$ of Theorem~\ref{th:traceest} is just $N$ since $U(N)$ spans $N$.  Then (\S 3, Note) $M$ is $G$-amenable.  Now apply Theorem~\ref{th:traceest}, noting that $\si$ has to be $\tau_{N}$ by the uniqueness of the tracial state on $N$.
\end{proof}


\end{document}